\documentclass[11pt]{amsart}

\usepackage{mathtools, amssymb, amsthm}
\usepackage{xcolor}
\usepackage[all]{xy}
\usepackage{comment}
\usepackage{float}
\usepackage{graphicx} 
\usepackage{subcaption}
\usepackage{algorithm}
\usepackage{algpseudocode}
\usepackage{placeins}
\usepackage{tikz}
\usepackage{appendix}
\usepackage{tikz-3dplot}
\usepackage{tkz-euclide}
\usepackage[super]{nth}
\usepackage{nicematrix}
\usepackage{array}
\usepackage{arydshln}
\usepackage{enumitem}
\usepackage{pst-node}
\usepackage{tikz-cd} 
\usepackage{pifont}
\usepackage[T1]{fontenc}
\usepackage{bm}
\usepackage{relsize}
\usepackage{float}
\usepackage{placeins}
\usepackage{booktabs}
\usepackage{float}   
\usetikzlibrary{
  arrows.meta, bending,
  calc, positioning, fit, shapes,
  angles, quotes,
  automata,
  tikzmark
}

\newcounter{sdtri}

\newcounter{trilab}

\usepackage{listings}
\usepackage{hyperref}

\makeatletter
\def\bbordermatrix#1{\begingroup \m@th
  \@tempdima 4.75\p@
  \setbox\z@\vbox{%
    \def\cr{\crcr\noalign{\kern2\p@\global\let\cr\endline}}%
    \ialign{$##$\hfil\kern2\p@\kern\@tempdima&\thinspace\hfil$##$\hfil
      &&\quad\hfil$##$\hfil\crcr
      \omit\strut\hfil\crcr\noalign{\kern-\baselineskip}%
      #1\crcr\omit\strut\cr}}%
  \setbox\tw@\vbox{\unvcopy\z@\global\setbox\@ne\lastbox}%
  \setbox\tw@\hbox{\unhbox\@ne\unskip\global\setbox\@ne\lastbox}%
  \setbox\tw@\hbox{$\kern\wd\@ne\kern-\@tempdima\left[\kern-\wd\@ne
    \global\setbox\@ne\vbox{\box\@ne\kern2\p@}%
    \vcenter{\kern-\ht\@ne\unvbox\z@\kern-\baselineskip}\,\right]$}%
  \null\;\vbox{\kern\ht\@ne\box\tw@}\endgroup}
\makeatother
\usepackage{blkarray}

\makeatletter
  \renewcommand*\env@matrix[1][*\c@MaxMatrixCols c]{%
    \hskip -\arraycolsep
    \let\@ifnextchar\new@ifnextchar
  \array{#1}}
\makeatother

\newtheorem{theorem}{Theorem}[section]
\newtheorem{proposition}[theorem]{Proposition}
\newtheorem{corollary}[theorem]{Corollary}

\theoremstyle{definition}
\newtheorem{definition}[theorem]{Definition}
\newtheorem{example}[theorem]{Example}

\newtheorem{remark}[theorem]{Remark}

\newcommand{\bbZ}{\mathbb Z}

\newcommand{\bbF}{\mathbb{F}}
\newcommand{\low}{\mathrm{low}}
\newcommand{\dgm}{\mathrm{dgm}}
\newcommand{\bv}{\mathbf{v}}

\makeatletter
\@addtoreset{equation}{section}
\makeatother

\newdimen\numht
\newdimen\numwd

\definecolor{bubbles}{rgb}{0.91, 1.0, 1.0}
\definecolor{asparagus}{rgb}{0.53, 0.66, 0.42}
\definecolor{camel}{rgb}{0.76, 0.6, 0.42}
\definecolor{codegreen}{rgb}{0,0.6,0}
\definecolor{codegray}{rgb}{0.5,0.5,0.5}
\definecolor{codepurple}{rgb}{0.58,0,0.82}
\definecolor{backcolour}{rgb}{0.95,0.95,0.92}
\definecolor{airforceblue}{rgb}{0.36, 0.54, 0.66}

\newcommand\scalemath[2]{\scalebox{#1}{\mbox{\ensuremath{\displaystyle #2}}}}
\graphicspath{ {./images/} }

\newcommand*\circled[1]{\tikz[baseline=(char.base)]{
\node[shape=circle,draw,inner sep=2pt] (char) {#1};}}

\lstdefinestyle{mystyle}{
    backgroundcolor=\color{white},   
    commentstyle=\color{codegreen},
    keywordstyle=\color{magenta},
    numberstyle=\tiny\color{codegray},
    stringstyle=\color{codepurple},
    basicstyle=\ttfamily\footnotesize,
    breakatwhitespace=false,         
    breaklines=true,                 
    captionpos=b,                    
    keepspaces=true,                 
    numbers=left,                    
    numbersep=5pt,                  
    showspaces=false,                
    showstringspaces=false,
    showtabs=false,                  
    tabsize=1
}

\begin{document}

\title{Worst-Case Examples for the Computation of Persistent Homology}
\author{Uzay {\c C}etin and Erg{\" u}n Yal{\c c}{\i}n}
\date{\today}

\address{Uzay {\c C}etin,
Department of Mathematics, The Ohio State University, 
Columbus, OH, 43210-1174, USA}

\email{cetin.16@osu.edu}

\address{Erg\" un Yal\c c\i n, Department of Mathematics, Bilkent University, 06800 
Bilkent, Ankara, Turkiye}

\email{yalcine@fen.bilkent.edu.tr}

\begin{abstract}  We construct worst-case examples for the standard reduction algorithm for computing persistent homology. Our constructions are similar to the worst-case examples introduced by Morozov, but we replace the single-triangle arrangement with a strip of base and fin triangles. This structure allows us to give an explicit algorithm for their construction and to perform experiments comparing the runtime of different versions of the reduction algorithm.  
We further show that, after suitable edge and triangle subdivisions, these strip examples remain worst-case and can be realized as clique complexes of filtered graphs, and hence as Vietoris--Rips complexes of finite point clouds for a sequence of scale parameters. 
\end{abstract}
	
\subjclass{ 55N31 (primary), 68W40, 68R10 (secondary)}

\keywords{Topological Data Analysis, Persistent Homology, Reduction Algorithm, Algorithmic Complexity, Clique Complexes}
	
\maketitle

\section{Introduction}\label{sect:Introduction}

Due to the population increase and advances in internet and digital technologies, data has become one of the most important aspects of our lives. While classical mathematical methods such as statistical and clustering techniques are useful in data analysis, Topological Data Analysis (TDA) has emerged as an important new tool for analyzing data in last 20 years \cite{Carlsson-2009}. TDA is currently used in many areas, including genomics, robotics, medical imaging, cancer research, the spread of infectious diseases, financial networks, and natural language analysis (see, for example, \cite{ChazalMichel-2021}, \cite{DeyWang-2022}, \cite{RabadBlumberg-2020}, and \cite{CoskunuzerAkcora-2024}).  
  
Topological Data Analysis uses methods from algebraic topology to analyze data.  One of the main methods in TDA is persistent homology, which was developed by Edelsbruner, Letscher, Zomorodin, Carlsson, and others (see \cite{ELZ-2002}, \cite{ZomorodianCarlsson-2005}). In most of the applications, TDA via persistent homology takes as input a finite metric space embedded in $\mathbb{R}^m$ and produces persistent diagrams as output. One of the reasons that makes the persistent homology so effective is that it can be computed by a simple algorithm called the reduction algorithm. There are many computer packages based on different versions of the reduction algorithm that computes the persistent diagram of a filtered simplicial complex (see \cite{Otter-2017} for a comparison of the runtime performance of different packages). 
 
In \cite{Morozov-2005}, Morozov constructed a family of filtered simplicial complexes that provide worst-case examples for the reduction algorithm, exhibiting a runtime of $\Omega (N^3)$, where $N$ denotes the number of simplices. The main purpose of this paper is to give a new family of worst-case examples similar to Morozov's examples, but they are constructed by successively adding base and fin triangles to a strip of triangles instead of adding triangles inside a single triangle. We refer to these new set of worst-case examples as \emph{strip examples}. Because of their strip structure, 
it is easier to give an explicit algorithm for their construction and to perform experiments comparing the runtime of different versions of the reduction algorithm. 
 
In Theorem \ref{thm:Runtime}, we show that the runtime of the reduction algorithm on the strip worst-case examples is again $\Omega(N^3)$, where $N$ is the number of simplices. 
In Appendix \ref{sect:Algorithm}, we present a simple pseudo-code for constructing the strip examples for any number of triangles. In Section \ref{sect:Experiments} we use this code to perform computational experiments and report on the results comparing the runtimes of different versions of the reduction algorithm such as the standard reduction algorithm \cite{ELZ-2002}, the reduction algorithm with a twist \cite{ChenKerber-2011}, and the Look-ahead variant of the reduction algorithm \cite{MorozovSkraba-2024}.  

In Section \ref{sect:RealizingStrip}, we show that after modifying the strip examples by subdividing some of the triangles and edges, they can be realized as clique complexes of filtered graphs. Then we show that these filtered complexes can be realized as filtered graphs associated to a finite data cloud in some Euclidean space. As a consequence we obtain that these modified strip examples can be realized as Vietoris--Rips complexes of finite point clouds for some sequence of real numbers. 


\section{Matrix Reduction Algorithms}\label{sect:ReductionAlgorithms}

We refer the reader to \cite{DeyWang-2022}, \cite{RabadBlumberg-2020}, and \cite{CoskunuzerAkcora-2024} for basic definitions on persistent homology. Here we only give the definitions related to filtered simplicial complexes that will be used in the paper to explain the reduction algorithm.

\begin{definition}
A \emph{simplicial complex} $K$ consists of a pair $(V, S)$ where $V$ is the set of vertices of $K$, and $S$ is a set of subsets of $V$, called the simplices of $K$, satisfying the following properties:
\begin{enumerate}
\item for all $v\in V$, $\{ v \} \in S$,
\item if $\sigma \in S$ and $\tau \subseteq \sigma$, then $\tau \in S$.
\end{enumerate}
\end{definition}

A simplex $\sigma$ in $K$ is called a \emph{$k$-simplex} or a \emph{$k$-dimensional simplex} if $|\sigma | =k+1$. For a simplex $\sigma$ in $K$, a subset $\tau \subseteq \sigma$
is called a \emph{face} of $\sigma$. If $\dim \tau =\dim \sigma -1$, then $\tau$ is called a \emph{facet} of $\sigma$.
We say the simplicial complex $K$ is finite if its vertex set $V$ is finite. Throughout the paper we assume all the simplicial complexes are finite. The dimension of $K$ is the largest $d$ such that $K$ has a $d$-dimensional simplex.
We say that the simplicial complex $K'=(V', S')$ is a subcomplex of $K=(V, S)$ if $V'\subseteq V$ and $S' \subseteq S$. In this case we write $K' \subseteq K$.

\begin{definition} A nested sequence of simplical complexes
\[
\emptyset \subseteq K_1 \subseteq K_2 \subseteq \dots \subseteq K_m = K
\]
is called a \emph{filtration of $K$}. A simplicial complex with a filtration is called a \emph{filtered simplicial complex}. 
\end{definition}

A filtration function on a simplicial complex $K=(V, S)$ is a function $f : S \to  \mathbb{R}$ satisfying $f(\tau) \leq f ( \sigma )$ whenever $\tau \subseteq \sigma$. For each filtered complex $K$, we can define 
a filtration function $f: K \to \bbZ^{+}=\{1, 2, \dots\}$ such that for each simplex $\sigma$, $f(\sigma)$ is the smallest $i \in \bbZ$ such that $\sigma \in K_i$. Conversely, given a filtration function $f$ on a complex $K$, we can choose finitely many real numbers $r_1, \dots , r_m$ and define the filtration on $K$ by taking $K_i \subseteq K$ to be the subcomplex consists of simplices $\sigma\in K$ such that $f(\sigma) \leq r_i$.

Let $K=(V, S)$ be a filtered simplicial complex $\emptyset \subseteq K_1 \subseteq K_2 \subseteq \cdots \subseteq K_m=K$ with associated filtration function $f: K \to \mathbb{Z}^{+}$. Consider the chain complex $C_* (K)$ of $K$ in $\mathbb{F}_2$-coefficients. We can consider the chain complex $C_*(K)$ as a graded $\mathbb{F}_2$-vector space $C_* (K) = \bigoplus _{p=0} ^d C_p(K)$, where $d=\dim (K)$, and consider the family of boundary maps $\partial_*=\{ \partial_p : C_p (K)\to C_{p-1} (K)\}_{p=1} ^d$ as a linear transformation of degree -1 between graded vector spaces. 

To define a matrix representing the boundary map $\partial_* : C_* (K) \to C_*(K)$, we assume that the simplices in $K$ are totally ordered, i.e. sorted as a list $\sigma_1, \dots, \sigma_N$, in such a way that it satisfies the following properties:
\begin{enumerate}
\item If $\sigma_i$ is a face of $\sigma_j$, then $i<j$.
\item If $f(\sigma_i ) < f(\sigma_j)$, then $i<j$.
\end{enumerate}
Such an ordering of simplices of $K$ is called \emph{ordering compatible} with the filtration. Because of condition (1), the boundary matrix is strictly upper triangular.

\begin{definition}
Let $K$ be a filtered simplicial complex and 
$S=\{ \sigma_1,\sigma_2,\dots,\sigma_N\}$ be the set of simplices of $K$ ordered with an ordering compatible with the filtration. 
The \emph{boundary matrix} of $K$ is the $N \times N$-matrix $D$ with entries in $\mathbb{F}_2$ such that 
\[
D_{i,j}= \Biggl\{\begin{array}{cc}
1 & \text{if $\sigma_i$ is a facet of $\sigma_j$} \\
0 & \text{otherwise.}
\end{array}
\]
For each $j$, the $j$-th column of the boundary matrix $D$ is denoted by $D_j$.   
\end{definition}


\subsection{The Standard Algorithm for Reduction}

It is shown in \cite{ELZ-2002} that the computation of the persistent diagram can be accomplished by means of a simple
algorithm that brings the boundary matrix into a reduced form. 
To define this algorithm we need to introduce a function called the low function.

\begin{definition}\label{def:low}
For every nonzero vector $\bv$ with entries in $\bbF_2$,  the \emph{low of $\bv$} is defined by  
\[
\low (\bv) = \max (\{i \, |\, \bv [i] = 1\})
\]
where $\bv[i]$ denotes the $i$-th entry of $\bv$. We set $\low (\mathbf{0})=0$.
\end{definition}

The standard reduction algorithm is similar to the Gaussian elimination but it is performed on the columns. For each $j$ running from $1$ to $N$, 
the $j$-th column $D_j$ is modified as follows: If $\low(D_i) = \low(D_j)$ for some $i < j$, then the vector $D_j$ is replaced by $D_i + D_j$. Here the addition is performed in the field $\mathbb{F}_2$. Pseudocode for standard algorithm can be given in Algorithm \ref{alg:Standard}.

\begin{algorithm}
\caption{Standard Reduction Algorithm}\label{alg:cap}
\begin{algorithmic}
\State {\bf Input:} Boundary Matrix $D=[D_1|D_2|\cdots|D_N] \in \bbF_2 ^{N\times N}$
\State {\bf Output:} Reduced Matrix $R \in \bbF_2^{N\times N}$
\State $R \gets D;$
\For{$j=1$ to $N$} \Comment{$\Omega(N)$}
\While{$\exists i<j$ with $\low (D_i)=\low (D_j ) \neq 0$} \Comment{$\Omega(N)$}
\State $D_j \gets D_i+D_j$ \Comment{$\Omega(N)$}
\EndWhile
\EndFor
\end{algorithmic}
\label{alg:Standard}
\end{algorithm}

For every $k=1,\dots,d$, the persistent diagram $\dgm_k (K)$ for a filtered complex $K$ of dimension $d$ is the set of all persistent pairs $(f(\sigma_i), f(\sigma _j))$ such that $f(\sigma_i)$ is the filtration level where a cohomology class at dimension $k$ is born, and $f(\sigma_j)$ is the level where it dies (see \cite{DeyWang-2022} for details).
Note that if $R$ is the reduced matrix obtained from the boundary matrix of $K$ after applying the reduction algorithm, then the pair
$(f(\sigma _i), f(\sigma_j))$ is in the persistent diagram $\dgm _k(K)$ if and only if $\sigma_i$ is a $k$-dimensional simplex and $i = \low (R_j) \neq 0$. 

For all the columns of $R$ with $R_i=0$, we can conclude that there is a birth at $f(\sigma_i)$. If $R_i=0$ for some $i$ and there is no low at $R_{i,j}$ for any $j$, then we say there is a persistence pair $(f(\sigma_i), \infty)$.

\begin{example}\label{ex:Standard}  
Let $K$ be the filtered simplicial complex whose simplices in the order of appearance are given as follows: 
\[
K=\{ [1], [2], [3], [1,2], [1,3], [2,3], [1,2,3]\}.
\]
The boundary matrix of $K$ is:
\[
D=\bbordermatrix{ 
        & [1] & [2] & [3] & [1,2] & [1,3] & [2,3] & [1,2,3] \cr
    [1] &  0 & 0 & 0 & 1 & 1 & 0 & 0 \cr
    [2] &  0 & 0 & 0 & 1 & 0 & 1 & 0 \cr
    [3] &  0 & 0 & 0 & 0 & 1 & 1 & 0 \cr
    [1,2]  & 0 & 0 & 0 & 0 & 0 & 0 & 1 \cr
    [1,3]  & 0 & 0 & 0 & 0 & 0 & 0 & 1 \cr
    [2,3]  & 0 & 0 & 0 & 0 & 0 & 0 & 1 \cr
    [1,2,3]  & 0 & 0 & 0 & 0 & 0 & 0 & 0 \cr}
\]    
Then, the standard reduction algorithm is applied to the boundary matrix $D$ as follows:
\[
D =\scalemath{0.8}{\begin{bmatrix*}        
            0 & 0 & 0 & 1 & 1 & 0 & 0 \\
             0 & 0 & 0 & \textcolor{red}{1} & 0 & 1 & 0 \\
             0 & 0 & 0 & 0 & \textcolor{red}{1} & \textcolor{red}{1} & 0 \\
             0 & 0 & 0 & 0 & 0 & 0 & 1 \\
             0 & 0 & 0 & 0 & 0 & 0 & 1 \\
            0 & 0 & 0 & 0 & 0 & 0 & \textcolor{red}{1} \\
             0 & 0 & 0 & 0 & 0 & 0 & 0 
        \end{bmatrix*} 
        \xrightarrow{C5+C6 \xrightarrow{} C6} 
        \begin{bmatrix*}
             0 & 0 & 0 & 1 & 1 & 1 & 0 \\
             0 & 0 & 0 & \textcolor{red}{1} & 0 & \textcolor{red}{1} & 0 \\
             0 & 0 & 0 & 0 & \textcolor{red}{1} & 0 & 0 \\
             0 & 0 & 0 & 0 & 0 & 0 & 1 \\
             0 & 0 & 0 & 0 & 0 & 0 & 1 \\
             0 & 0 & 0 & 0 & 0 & 0 & \textcolor{red}{1} \\
             0 & 0 & 0 & 0 & 0 & 0 & 0 
        \end{bmatrix*} \xrightarrow{C4+C6 \xrightarrow{} C6} \begin{bmatrix*}
             0 & 0 & 0 & 1 & 1 & 0 & 0 \\
             0 & 0 & 0 & \textcolor{red}{1} & 0 & 0 & 0 \\
             0 & 0 & 0 & 0 & \textcolor{red}{1} & 0 & 0 \\
             0 & 0 & 0 & 0 & 0 & 0 & 1 \\
             0 & 0 & 0 & 0 & 0 & 0 & 1 \\
             0 & 0 & 0 & 0 & 0 & 0 & \textcolor{red}{1} \\
             0 & 0 & 0 & 0 & 0 & 0 & 0 
        \end{bmatrix*}}=R
\]        
\end{example}
If we take the filtration function to be the function that assigns $\sigma_i$ to the index $i$, then from the indices of the lows in  the reduction matrix, we can conclude that the set of persistent pairs for the filtered simplicial complex $K$ is
\[\dgm_0(K)=\{(1, \infty), (2, 4), (3, 5) \} \text{ and } \dgm_1(K)=\{(6,7)\}.
\]


\subsection{Reduction Algorithm with a Twist}

 Chen and Kerber \cite{ChenKerber-2011} introduced a variation of the reduction algorithm called the reduction algorithm with a twist.
 The idea behind this new algorithm is that if we do the reduction by first applying it to higher dimensional simplices, we will find a persistent pair $(f(\sigma_i), f(\sigma_j))$ after reducing the $j$-column before processing the $i$-th column. Since this pair tells us that the simplex $\sigma_i$ creates a homology class, we must have the $i$-th column in the reduced matrix equal to zero. So we can clear this column, i.e. set to zero without doing any column additions. This saves many unnecessary column operations and makes the reduction algorithm run much faster. 
 
To introduce the pseudocode for this algorithm, first note that the boundary matrix can be decomposed to smaller boundary matrices for linear maps $\partial _p : C_p (K) \to C_{p-1} (K)$ for $p=1, \dots, d$.   
The reduction algorithm can be described using these smaller boundary matrices. Pseudocode for the reduction algorithm with a twist is given in Algorithm \ref{alg:Twist}.

\begin{algorithm}[http]
\begin{algorithmic}
\State {\bf Input:} Boundary Matrix $D =[D_1|D_2|\cdots|D_N] \in \bbF_2 ^{N\times N}$
\State {\bf Output:} Reduced Matrix $R \in \bbF_2^{N\times N}$
\State $R \gets D;\hspace{1mm} L: [0,0,\dots,0]$
\For{$\delta =d,\dots,1$}
    \For{ $j=N,\dots,1$ }
        \If {$\dim \sigma _j =\delta$} 
        \While{$R_j \neq 0$ and $L[\low(R_j)] \neq 0$ }
        \State $R_j \gets R_j+R_{L[\low(R_j)]}$
        \EndWhile
        \If {$R_j \neq 0$}
        \State $i \gets \low(R_j)$
        \State $L[i] \gets j$
        \State $R_i \gets 0$
        \EndIf
        \EndIf
        \EndFor
        \EndFor
\end{algorithmic}
\caption{Reduction Algorithm with a Twist}
\label{alg:Twist}
\end{algorithm}


\begin{example}\label{ex:Twist} 
Let $D$ denote the boundary matrix of the filtered simplicial complex as in Example \ref{ex:Standard}. Then, the reduced matrix $R$ is obtained using Chen and Kerber's algorithm after applying only one step:

\[
D=\scalemath{0.8}{\begin{bmatrix*}
             0 & 0 & 0 & 1 & 1 & 0 & 0 \\
             0 & 0 & 0 & \textcolor{red}{1} & 0 & 1 & 0 \\
             0 & 0 & 0 & 0 & \textcolor{red}{1} & \textcolor{red}{1} & 0 \\
             0 & 0 & 0 & 0 & 0 & 0 & 1 \\
             0 & 0 & 0 & 0 & 0 & 0 & 1 \\
             0 & 0 & 0 & 0 & 0 & 0 & \textcolor{red}{1} \\
             0 & 0 & 0 & 0 & 0 & 0 & 0 
        \end{bmatrix*} \xrightarrow{\text{clear the $6$-th column}} \begin{bmatrix*}
            0 & 0 & 0 & 1 & 1 & 0 & 0 \\
            0 & 0 & 0 & \textcolor{red}{1} & 0 & 0 & 0 \\
            0 & 0 & 0 & 0 & \textcolor{red}{1} & 0 & 0 \\
            0 & 0 & 0 & 0 & 0 & 0 & 1 \\
            0 & 0 & 0 & 0 & 0 & 0 & 1 \\
            0 & 0 & 0 & 0 & 0 & 0 & \textcolor{red}{1} \\
            0 & 0 & 0 & 0 & 0 & 0 & 0 
        \end{bmatrix*}}=R
\]        
\end{example}

The twist algorithm is a very efficient algorithm, and it is used in all the current persistent homology calculation
packages such as Gudhi, Phat, and Ripser. We discuss the runtime of the twist algorithm in Section \ref{sect:Runtime}.


\subsection{Reduction Algorithm with Look-ahead} 

In a recent paper, Morozov and Skraba \cite{MorozovSkraba-2024} introduced an algorithm for persistent homology
in matrix multiplication time. In this paper they also discuss a look-ahead variant of the standard reduction algorithm. 
The main idea of this algorithm is that if the low of the column $R_j$ is $i\neq 0$, then for every column $R_{j'}$ with $j'>j$ and $R_{i, j'} =1$, we add column $R_j$ to $R_{j'}$ to make $R_{i, j'}=0$. Pseudocode for this algorithm is given in Algorithm \ref{alg:Lookahead}.

\begin{algorithm}[http]
\begin{algorithmic}
\State {\bf Input:} Boundary Matrix $D=[D_1|D_2|\cdots|D_N] \in \bbF_2 ^{N\times N}$
\State {\bf Output:} Reduced Matrix $R \in \bbF_2^{N\times N}$
\State $R \gets D;$
\For{$j=1$ to $N$} \Comment{$\Omega(N)$}
\If{$\low (R_j)\neq 0$}  
\State{$i \gets  \low(R_j)$}
\For{ $j'>j$ and $R[i, j']=1$}  \Comment{$\Omega(N)$}    
\State{ $R[\cdot, j']\gets R[\cdot, j'] + R[\cdot, j]$} \Comment{$\Omega(N)$}    
\EndFor
\EndIf
\EndFor
\end{algorithmic}
\caption{Reduction Algorithm with Look-ahead}\label{alg:Lookahead}
\end{algorithm}

\begin{example}\label{ex:Look-ahead} Let $D$ denote the boundary matrix as in Example \ref{ex:Standard}. Then, with the look-ahead reduction algorithm, the reduced matrix $R$ is obtained as follows:

\[
D=\scalemath{0.8}{\begin{bmatrix*}        
0 & 0 & 0 & 1 & 1 & 0 & 0 \\
0 & 0 & 0 & \textcolor{red}{1} & 0 & 1 & 0 \\
0 & 0 & 0 & 0 & \textcolor{red}{1} & \textcolor{red}{1} & 0 \\
0 & 0 & 0 & 0 & 0 & 0 & 1 \\
0 & 0 & 0 & 0 & 0 & 0 & 1 \\
0 & 0 & 0 & 0 & 0 & 0 & \textcolor{red}{1} \\
0 & 0 & 0 & 0 & 0 & 0 & 0 
\end{bmatrix*} 
\xrightarrow{C4+C6 \xrightarrow{} C6} 
\begin{bmatrix*}
0 & 0 & 0 & 1 & 1 & 1 & 0 \\
0 & 0 & 0 & \textcolor{red}{1} & 0 & 0 & 0 \\
0 & 0 & 0 & 0 & \textcolor{red}{1} & \textcolor{red}{1} & 0 \\
0 & 0 & 0 & 0 & 0 & 0 & 1 \\
0 & 0 & 0 & 0 & 0 & 0 & 1 \\
0 & 0 & 0 & 0 & 0 & 0 & \textcolor{red}{1} \\
0 & 0 & 0 & 0 & 0 & 0 & 0 
\end{bmatrix*} \xrightarrow{C5+C6 \xrightarrow{} C6} \begin{bmatrix*}
0 & 0 & 0 & 1 & 1 & 0 & 0 \\
0 & 0 & 0 & \textcolor{red}{1} & 0 & 0 & 0 \\
0 & 0 & 0 & 0 & \textcolor{red}{1} & 0 & 0 \\
0 & 0 & 0 & 0 & 0 & 0 & 1 \\
0 & 0 & 0 & 0 & 0 & 0 & 1 \\
0 & 0 & 0 & 0 & 0 & 0 & \textcolor{red}{1} \\
0 & 0 & 0 & 0 & 0 & 0 & 0 
\end{bmatrix*}}=R
\]
Note that in this example the reduced matrix is obtained using exactly the same number of operations as in the standard algorithm. This is not true in general. There are examples where the look-ahead  reduction algorithm performs better than the reduction algorithm. We discuss the runtime of the look-ahead variant of the reduction algorithm applied to the strip worst-case examples in Section \ref{sect:Experiments}. 
\end{example}


\subsection{The runtime of reduction algorithms}\label{sect:Runtime}

It is shown that for all three variants of the reduction algorithm, the runtime is $\Omega(N^3)$ in the worst-case where $N$ is the number of simplices. 

For the standard reduction algorithm, we see this by calculating runtime of the different steps of the algorithm.  
In this algorithm we have a \textbf{for} loop with $N$ elements so it runs in $\Omega(N)$ time. The \textbf{while} loop also runs in  $\Omega(N)$. Inside these loops the summation of two vectors of size $N$ is performed in $\Omega(N)$ time. So the runtime of the standard reduction algorithm is $\Omega(N^3).$

The runtime calculation for the reduction algorithm with a twist is similar. In this algorithm, the first two \textbf{for} loops run in $\Omega(N)$, the \textbf{while} loop runs in $\Omega(N)$, and the additions are performed in $\Omega(N)$ time.  So the complexity 
of the reduction algorithm with a twist is also $\Omega(N^3)$.

It is shown that in practice the twist algorithm is much more efficient because the boundary matrix is a sparse matrix in general. There is an explicit calculation for the runtime of this algorithm in terms of the number of persistent pairs in the reduced matrix (see  \cite{ChenKerber-2011}).
In the experiments that we performed with random data points of size 10, we observed that the number of vector additions that are performed goes down drastically by using the reduction algorithm with twist. Our experiments with the worst case
strip examples also support this claim (see Section \ref{sect:Experiments}).
 
In the look-ahead variant of the standard algorithm, we have two \textbf{for} loops each having runtime $\Omega(N)$, and there is a vector addition with runtime $\Omega(N)$. Hence the look-ahead algorithm also runs in $\Omega(N^3)$. Similar to the twist algorithm we can perform some calculations to compare the efficiency of the look-ahead reduction algorithm with other algorithms. In our experiments with a random data set of size 10, we obtained that the runtime of the look-ahead reduction algorithm is better than the runtime of the standard reduction algorithm.


\section{An Alternative Worst-case Example: Strip}\label{sect:StripExample}
    
In this section we construct a worst-case example for the reduction algorithm, called the strip example, similar to the worst-case examples introduced by Morozov in \cite{Morozov-2005}.

For each positive integer $n$, the \emph{strip worst-case complex} $X(n)$ is a filtered simplicial complex consisting of $n$ base triangles sitting on a plane and $n$ fin triangles with one edge lying on the plane and one vertex lying outside the plane.  We place these base triangles and fin triangles as shown in Figure \ref{fig:StripExample}. The base triangles are labeled $\circled{i}$ and the fin triangles are labeled  $\red \circled{-i}$ for $i=1, \dots, n$. The vertices of these triangles are labeled $v_1, \dots , v_{n+2}$ and $f_1, \dots, f_n$. The labeling of the base triangles starts from the center and then alternates left and right.

We label the horizontal edges of the base triangles  $1', \dots, n'$ such that $i'$ is the horizontal edge of the base triangle $\circled{i}$. The vertical base edges are labeled $1, \dots, n+1$ so that 1 is the edge between $v_1$ and $v_2$, and for all $i=2, \dots, n+1$, the edge $i$ is between $v_{i-1}$ and $v_{i+1}$ as shown in Figure \ref{fig:StripExample}. The base edge with label $n+1$ is also labeled as $0'$ to make it easier to justify its place in the filtration. 

The edges of the fin triangles are labeled $-1, \dots, -n$ and $-1^*, \dots, -n^*$. The edge $-1$ is between the vertices $f_1$
and $v_2$, and the edge $-1^*$ is between $f_1$ and $v_1$. For all $i=2, \dots, n$, the edge  $-i$ is between the vertices $f_i$ and $v_{i+1}$, and the edge $-i^*$ is between $f_i$ and $v_{i-1}$.
Note that there is one fin triangle above each vertical base edge except the edge with label $n+1=0'$. 
 

\begin{figure}[H]
\centering
\begin{tikzpicture}[
  scale=0.92,
  line cap=round, line join=round,
  every node/.style={font=\small},
  base/.style={draw=black, line width=0.5pt},
  blueE/.style={draw=blue, line width=0.5pt},
  fin/.style={draw=red, line width=0.5pt},
  dot/.style={circle, fill=black, inner sep=1pt},
  fdot/.style={circle, fill=red, inner sep=1pt}
]

\coordinate (L0) at (-6,0);
\coordinate (L1) at (-5,1.73);
\coordinate (L2) at (-4,0);
\coordinate (L3) at (-3,1.73);

\coordinate (A0) at (-2,0);
\coordinate (A1) at (-1,1.73);
\coordinate (A2) at (0,0);
\coordinate (A3) at (1,1.73);
\coordinate (A4) at (2,0);
\coordinate (A5) at (3,1.73);
\coordinate (A6) at (4,0);

\coordinate (R0) at (6,0);
\coordinate (R1) at (5,1.73);
\coordinate (R2) at (8,0);
\coordinate (R3) at (7,1.73);

\coordinate (fL) at (-6.2,4.8);   
\coordinate (f1) at (-0.7,4.8);   
\coordinate (fR) at (5.3,4.8);    

\draw[base] (L0)--(L2) (A0)--(A2)--(A4)--(A6) (R0)--(R2);
\draw[base] (L1)--(L3) (A1)--(A3)--(A5) (R1)--(R3);

\draw[base] (L0)--(L1) (L2)--(L1) (L2)--(L3);
\draw[base] (A0)--(A1) (A2)--(A1) (A2)--(A3)
           (A4)--(A3) (A4)--(A5) (A6)--(A5);
\draw[base] (R0)--(R1) (R2)--(R3) (R0)--(R3);

\draw[base] (L0)--(L2)--(L1)--cycle
  node[midway,xshift=10,yshift=-10] {$\circled{n-1}$};
\draw[base] (L1)--(L3)--(L2)--cycle
  node[midway,xshift=12,yshift=10] {$\circled{n-3}$};

\draw[base] (A4)--(A2)--(A3)--cycle
  node[midway,xshift=-12,yshift=-10] {$\circled 1$};
\draw[base] (A2)--(A3)--(A1)--cycle
  node[midway,xshift=13,yshift=10] {$\circled 2$};
\draw[base] (A4)--(A5)--(A3)--cycle
  node[midway,xshift=10,yshift=10] {$\circled 3$};
\draw[base] (A2)--(A0)--(A1)--cycle
  node[midway,xshift=-12,yshift=-10] {$\circled 4$};
\draw[base] (A6)--(A4)--(A5)--cycle
  node[midway,xshift=-12,yshift=-10] {$\circled 5$};

\draw[base] (R0)--(R2)--(R3)--cycle
  node[midway,xshift=10,yshift=-10] {$\circled{n}$};
\draw[base] (R1)--(R3)--(R0)--cycle
  node[midway,xshift=12,yshift=10] {$\circled{n-2}$};

\node at (-2.7,0.9) {$\dots$};
\node at (4.7,0.9) {$\dots$};
\node[red!75!black] at (-3.4,4.8) {$\dots$};
\node[red!75!black] at (2.6,4.8) {$\dots$};

\draw[blueE] (A3)--(A5) node[midway,above,scale=0.8] {$3'$};
\draw[blueE] (A1)--(A3) node[midway,above,scale=0.8] {$2'$};
\draw[blueE] (R1)--(R3) node[midway,above,scale=0.6] {$(n-2)'$};
\draw[blueE] (L1)--(L3) node[midway,above,scale=0.6] {$(n-3)'$};

\draw[blueE] (A2)--(A4) node[midway,below,scale=0.8] {$1'$};
\draw[blueE] (A0)--(A2) node[midway,below,scale=0.8] {$4'$};
\draw[blueE] (A4)--(A6) node[midway,below,scale=0.8] {$5'$};
\draw[blueE] (R0)--(R2) node[midway,below,scale=0.7] {$n'$};
\draw[blueE] (L0)--(L2) node[midway,below,scale=0.6] {$(n-1)'$};

\draw[blueE] (R2)--(R3) node[midway,right,scale=0.7] {$n+1=0'$};

\draw[base] (A2)--(A3) node[midway,left,scale=0.65] {$1$};
\draw[base] (A4)--(A3) node[midway,left,scale=0.65] {$2$};
\draw[base] (A2)--(A1) node[midway,left,scale=0.65] {$3$};
\draw[base] (A4)--(A5) node[midway,left,scale=0.65] {$4$};
\draw[base] (A0)--(A1) node[midway,left,scale=0.65] {$5$};
\draw[base] (A5)--(A6) node[midway,right,scale=0.65] {$6$};

\node[left,scale=0.7] at (-5.5,0.866) {$n$};
\node[right, xshift=2, yshift=5, scale=0.6] at (6.5,0.866) {$n-1$};

\draw[red, fin] (L0)-- node[midway,yshift=18,xshift=-13] {$-n$}
           (fL) -- node[midway,yshift=5,xshift=10] {$-n^{*}$} (L1);
\node[red,scale=0.8] at (-5.7,2.5) {$\circled{-n}$};
\node[fdot] at (fL) {};
\node[left,red] at ($(fL)+(-0.1,0.1)$) {$f_n$};

\draw[red, fin] (A2)-- node[midway,yshift=18,xshift=-14] {$-1$}
           (f1) -- node[midway,yshift=5,xshift=10] {$-1^{*}$} (A3);
\node[red,scale=0.8] at (0.1,2.5) {$\circled{-1}$};
\node[fdot] at (f1) {};
\node[left,red] at ($(f1)+(-0.1,0.1)$) {$f_1$};

\draw[red, fin] (R0)-- node[midway,yshift=18,xshift=-22,scale=0.7] {$(-n+1)^{*}$}
           (fR) -- node[midway,yshift=5,xshift=13,scale=0.7] {$-n+1$} (R3);
\node[red,scale=0.6] at (6.1,2.5) {$\circled{-n+1}$};
\node[fdot] at (fR) {};
\node[left,red] at ($(fR)+(-0.1,0.1)$) {$f_{n-1}$};

\foreach \P in {L0,L1,L2,L3,A0,A1,A2,A3,A4,A5,A6,R0,R1,R2,R3}{
\node[dot] at (\P) {};
}

\node[below] at (L0) {$v_{n+1}$};
\node[above] at (L3) {$v_{n-5}$};
\node[below] at (L2) {$v_{n-3}$};
\node[above] at (L1) {$v_{n-1}$};

\node[below] at (A0) {$v_6$};
\node[above] at (A1) {$v_4$};
\node[below] at (A2) {$v_2$};
\node[above] at (A3) {$v_1$};
\node[below] at (A4) {$v_3$};
\node[above] at (A5) {$v_5$};
\node[below] at (A6) {$v_7$};

\node[below] at (R0) {$v_{n-2}$};
\node[above] at (R1) {$v_{n-4}$};
\node[above] at (R3) {$v_n$};
\node[below] at (R2) {$v_{n+2}$};

\end{tikzpicture}
\caption{The simplicial complex \(X(n)\) for \(n\equiv 1 \pmod 4\).}
\label{fig:StripExample}
\end{figure}
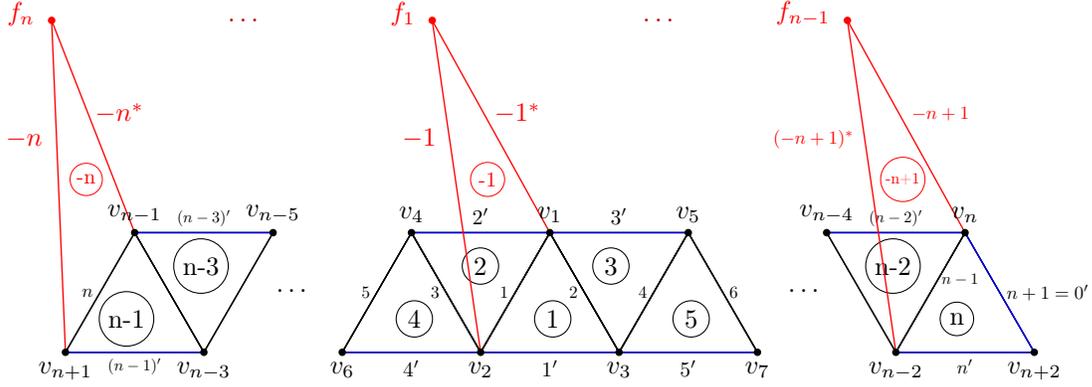

The strip worst-case examples grow inductively by adding triangles. To reach from $X(n)$ to $X(n+1)$ we just add a fin triangle above the edge $n+1$ and a base triangle next to the edge with label $n$. Note that the simplicial complex $X(n)$ has $2n+2$ vertices, $4n+1$ edges, and $2n$ triangles. The number of simplices of $X(n)$ is $N=8n+3$. The Euler characteristic of $X(n)$ is equal to 1 since it has homology isomorphic to the homology of a point. To define the filtration on $X(n)$, we order the simplices of the complex using the following ordering: 
\begin{enumerate}
\item Fin in vertices $f_1, \dots, f_n$,
\item Base vertices $v_1, \dots, v_{n+2}$, 
\item Horizontal base edges that destroy components $n',(n-1)',\dots,1'$,
\item Base edge $n+1$ which is also labeled as $0'$,
\item Fin edges that destroy fin components $-n^{*},\dots,-1^{*}$, 
\item Fin edges that creates cycles $-n,-n+1,\dots,-1$, 
\item Vertical base edges $n, n-1, \dots, 1$, 
\item Base triangles $\circled{1},\circled{2},\dots,\circled{n}$, and
\item Fin triangles $\red \circled{-1}, \red \circled{-2}, \dots, \red\circled{-n}$.
\end{enumerate}

To simplify calculations, in the proof of Theorem \ref{thm:Runtime} we apply the reduction algorithm only on the boundary matrix for $\partial _2 : C_2 (X(n)) \to C_1 (X(n)).$ We denote this matrix by $B(n)$ and the associated reduced matrix by $R(n)$.

\begin{example}\label{ex:B(2)}
For $n=2$, the boundary matrix $B(2)$ and the reduced matrix $R(2)$ are as follows:
\[
B(2) =\bbordermatrix{ 
& \circled{1} & \circled{2} & \red \circled{-1} & \red \circled{-2} \cr
\ 2' &  0 & 1 & 0& 0   \cr
\ 1' &  1 & 0 & 0 & 0   \cr
\ 0' &  0 & 1 & 0& 0  \cr
-2^* & 0 & 0 & 0 & 1 \cr
-1^* & 0 & 0 & 1 & 0 \cr 
-2 & 0 & 0 & 0& 1   \cr
-1 & 0 & 0 & 1& 0   \cr
\ 2 & 1 & 0 & 0& 1  \cr
\ 1 & 1 & 1 & 1 & 0  \cr}
 \xrightarrow{\substack{C_1 +C_2 \rightarrow C_2\\
                         C_1 +C_3 \rightarrow C_3\\
                         C_2+C_3 \rightarrow C_3 \\
                         C_2+C_4 \rightarrow C_4}} 
\bbordermatrix{ 
& \circled{1} & \circled{2} & \red \circled{-1} & \red \circled{-2} \cr
&  0 & 1 & 1 & 1  \cr
&  1 & 1 & 0& 1\cr
&  0 & 1 & 1 & 1 \cr
& 0 & 0 & 0& 1 \cr
& 0 & 0 & 1 & 0  \cr
& 0 & 0 & 0& 1 \cr
& 0 & 0 & 1 & 0  \cr  
& 1 & 1 & 0 & 0  \cr
& 1 & 0 & 0 & 0 \cr}=R(2)
\]
The processing of the columns of the matrix $B(2)$ can be explained using the sparse matrix notation:
\begin{align*}
\circled{1} & \to (1, 2, 1') 
\end{align*}
Note that the low of the column for $\circled{1}$ is the row corresponding to the edge $1$. In this case we say the triangle $\circled{1}$ is paired with the edge $1$. For the triangle $\circled{2}$, we have 
\begin{align*} 
\circled{2} & \to (1, 0', 2') \\
\circled{1}+\circled{2} & \to  (1, 2,1')+ (1, 0', 2')
=(2, 0', 1', 2')
\end{align*}
This pairs the triangle $\circled{2}$ with the edge $2$. For the fin triangles we have a similar pairing process.  
\begin{align*}
{\red \circled{-1}} & \to (1, -1, -1^*)  \\
{\red \circled{-1}} + \circled{1} & \to (1, -1, -1^*)+ (1, 2,1')=(2, -1, -1^*, 1') \\
{\red \circled{-1}}+ \circled{1} + (\circled{1} + \circled{2}) & \to (2, -1, -1^*, 1')+ (2, 0', 1', 2')=  (-1,-1^*, 0', 2') \end{align*}
This makes the fin triangle $\red \circled{-1}$ pair with the edge $-1$. Finally for the fin triangle $\red \circled{-2}$, we have
\begin{align*}
{\red \circled{-2}} & \to (2, -2, -2^*)  \\
{\red \circled{-2}} + ( \circled{1} + \circled{2} ) & \to (2, -2, -2^*)+ (2, 0', 1', 2')= (-2, -2^*, 0', 1', 2') 
\end{align*}
This pairs the triangle $\red \circled{-2}$ with the edge $-2$.
So the persistent pairs for the 1-dimensional homology $H_1$ are
\[
\dgm_1 (X(2)) =\{ (1, \circled{1}), (2, \circled{2}), (-1, {\red \circled{-1}}), (-2, {\red \circled{-2}} ) \}.
\]
Here we listed the persistent pairs using the simplices since in the above matrices we used the simplices for labeling the columns and rows. 
Note that these pairs correspond to the indices of the lows in the reduced matrix $R(2)$.   
\end{example}
    
Now we prove our main result in this section.

\begin{theorem}\label{thm:Runtime}
Let $B(n)$ denote the matrix for the boundary map $\partial_2: C_2 (X(n))\to C_1 (X(n))$.   The runtime on the standard reduction algorithm applied to $B(n)$ has complexity $\Omega(n^3)$. As a consequence, the complexity of the standard reduction algorithm for the boundary matrix of $X(n)$ is $\Omega(N^3)$ where $N=8n+3$ is the number of simplices of $X(n)$. 
\end{theorem}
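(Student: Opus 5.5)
The plan is to determine the reduced columns of $B(n)$ explicitly by induction on $n$. From these we count the column additions, and we show that a quadratic number of them each involve columns with $\Omega(n)$ nonzero entries. Following Example \ref{ex:B(2)}, I would first prove a closed formula for the reduced base columns. Write $\Delta_k$ for the column obtained after reducing the base triangle $\circled{k}$. The claim is that the reduced base columns are the cumulative sums $\Delta_k=\circled{1}+\cdots+\circled{k}$ (up to the exact pattern forced by the left/right alternation). Each $\Delta_k$ is then the boundary of the union of the first $k$ base triangles. Its support consists of the horizontal edges $0',1',\dots,k'$, two vertical boundary edges, and its low is the vertical edge $k$.

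The proof of this claim goes by induction on $k$. The new triangle $\circled{k}$ has vertical edges $k-2$ (or a neighbour) and $k$, together with the horizontal edge $k'$. In the given order, vertical edges come after all horizontal and fin edges, and they are listed in the order $n,\dots,1$. So lower rows correspond to smaller labels, and the low of $\circled{k}$ is a vertical edge shared with the current partial strip. Adding the previously reduced column with that low replaces it by the other exposed vertical edge. Because the labelling alternates left and right, the new low is the edge on the opposite side, whose label is strictly larger. This continues until the low becomes the new edge $k$. The expected outcome is that reducing $\circled{k}$ costs about one or two additions, and all base triangles together cost only $O(n)$ additions of long columns. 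The base part therefore establishes the structure, but not the cost.

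The cubic cost should come from the fin triangles. The fin $\red\circled{-i}$ has boundary $(i,-i,-i^*)$, so its low is the vertical edge $i$. Its fin edges $-i,-i^*$ lie above all vertical edges in row order, so they are never cancelled. The reduction therefore keeps adding reduced base columns until no vertical edge remains. The example shows that the chain $\red\circled{-1}+\circled{1}+(\circled{1}+\circled{2})$ walks through vertical edges, and at each step the low moves to the vertical edge on the opposite end of the current accumulated strip. I would prove, by the same induction, that the reduction of $\red\circled{-i}$ passes through $\Omega(n-i)$ columns. Concretely, it must add reduced base columns that successively sweep out the strip from edge $i$ to an end, and it stops only when all vertical edges cancel, which happens at the boundary edge $n+1=0'$. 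Each added column contains $\Omega(n)$ horizontal edges $0',\dots,k'$. Summing over $i\le n/2$ gives $\Omega(n)\cdot\Omega(n)\cdot\Omega(n)=\Omega(n^3)$ work, which proves the first statement.

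The main obstacle is the precise bookkeeping of lows in the fin reductions. One must verify that each intermediate sum has a vertical-edge low distinct from all earlier fin pivots, so that it is really reduced by base columns rather than by previously reduced fin columns. The example shows that $\red\circled{-2}$ used only $\Delta_2$, which hints that the sweep length depends on the parity or position of $i$. I would first work out $n=3,4,5$ by hand, guess the exact sequence of added columns for $\red\circled{-i}$, and then prove it by induction. For the consequence, the $\partial_2$ block of the full boundary matrix of $X(n)$ is reduced identically by the standard algorithm, because columns of different dimensions never interact through lows. The full runtime is therefore at least that of $B(n)$, and since $N=8n+3=\Theta(n)$, this gives $\Omega(N^3)$.
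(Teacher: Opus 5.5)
Your strategy is the paper's: the reduced base columns are the cumulative sums $c_k=\circled{1}+\cdots+\circled{k}$, and each fin column is pushed through the vertical rows by adding these until its low leaves the vertical block. As written, though, the fact that produces the cubic count is deferred to guesswork from small cases. Your description, that the low jumps to ``the other end'' of the accumulated strip, does not by itself bound the number of jumps from below, because nothing you state prevents that other end from carrying a much larger label.

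What is needed is that the two vertical edges of $c_k$ are exactly $k$ and $k+1$. This is immediate from $\partial\circled{1}=1+2+1'$ and $\partial\circled{k}=(k-1)+(k+1)+k'$ for $k\ge 2$ (not $k-2$ and $k$ as you wrote). Inductively $c_k=c_{k-1}+\partial\circled{k}=k+(k+1)+1'+\cdots+k'$, with $n+1=0'$. So $0'$ occurs only in $c_n$, and each of $\circled{2},\dots,\circled{n}$ costs exactly one addition. Hence a fin column whose only vertical edge is $k$ has, after adding $c_k$, only the vertical edge $k+1$. The fin triangle $\circled{-i}$, with boundary $i+(-i)+(-i^*)$, is therefore reduced by adding precisely $c_i,c_{i+1},\dots,c_n$. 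That is $n-i+1$ additions, with no parity effect; the $n=2$ example is just the case $i=n$.

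Two smaller points. First, the earlier fin pivots are the rows $-1,\dots,-(i-1)$, which lie above every vertical row, so the interference you worry about cannot occur. Once the rows $n,\dots,1$ are cleared (the last step produces $n+1=0'$, which sits above the fin rows), the low is $-i$, which is still free. Second, not every added column is long, since $c_k$ has $k+2$ entries. So restrict to $i\le n/2$ and count only the additions of $c_k$ with $k\ge n/2$; this gives at least $\lfloor n/2\rfloor^3$ field operations. Your remark that triangle columns never interact with edge columns is the right justification for passing from $B(n)$ to the full boundary matrix, a step the paper leaves implicit.
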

    
\begin{proof} Extending the pairing algorithm given above for $n=2$, we see that the base triangles $\circled {i}$ will be paired by the base edges $i$ with the following list of representatives:
\begin{align*} \circled{1} & \to (1, 2, 1') \\    
\circled{1}+\circled{2} & \to (1, 2, 1')+(1,3, 2')= (2, 3, 1', 2') \\
\circled{1}+\circled{2} +\circled{3} & \to  (2,3, 1', 2')+(2,4,3')=(3, 4, 1', 2', 3') \\
& \vdots \\
\circled{1}+\circled{2} +\cdots + \circled{n-1} & \to (n-1, n, 1', 2', \cdots, (n-1)')  \\
\circled{1}+\circled{2} +\cdots + \circled{n} & \to (n, 0', 1', 2', \cdots, n') 
\end{align*}   
This table should be read in the following way. The boundary of $\circled{1}$ is $(1, 2, 1')$ as an ordered 3-tuple of edges. So $\circled{1}$ is paired with $1$. 
The boundary of the triangle $\circled{2}$ is  $(1, 3, 2')$. 
Since $1$ is already paired with $\circled{1}$, we need to add the column of $\circled{1}$ to the column of $\circled{2}$. This creates a chain $\circled{1}+\circled{2}$ with boundary $(2,3,1', 2')$. So the triangle $\circled{2}$ is paired with $2$. We continue this way until for all $i$, the triangle \circled{i} is paired with edge $i$. From the table we see that in the reduced matrix $R(n)$, the column for \circled{i} has $i+2$ nonzero entries.
For each $i$, let us denote the chain $\circled{1}+\cdots + \circled{i}=(i, i+1, 1', 2', \dots, i')$ by $c_i$. 

The complexity of processing the base triangles can be calculated as follows: Since the complexity of adding two vectors with $a_1$ and $a_2$ nonzero entries is equal to $a_1+a_2$, the complexity of performing all the above additions is
\[
(3+3)+(4+3)+(5+3)+\cdots+((n+1)+3)=\frac{1}{2}(n^2+9n-10).
\]

Now we calculate the complexity of processing the fin triangles. For each $i$, the processing of the fin triangle $\red{\circled{-1}}$ is illustrated in the following table:
\begin{align*}
{\red{\circled{-1}}} & \to (1, -1, -1^*) \\
{\red{\circled{-1}}} + c_1 & \to (1, -1, -1^*) +(1,2,1')=(2, -1, -1^*, 1') \\
\bigl( {\red{\circled {-1}}}+ c_1 \bigr )+ c_2 & \to (2, -1, -1^*, 1')+ (2,3,1', 2')=(3, -1, -1^*, 2') \\
\bigl( {\red{\circled{-1}}} +c_1+c_2 \bigr)  +c_3 & \to (3, -1, -1^*, 2')+ (3,4,1', 2', 3')= (4, -1, -1^*, 1', 3') \\
\bigl ( {\red{\circled{-1}}}+c_1 + c_2 + c_3 \bigr)  + c_4 & \to (4,-1, -1^*, 1', 3') +(4, 5, 1', 2', 3', 4') = (5, -1, -1^*, 2', 4') \\
& \vdots \\
\bigl ( {\red{\circled{-1}}}+c_1 + c_2 + \cdots + c_{n-1} \bigr ) +c_n & 
\to \begin{cases}  (-1, -1^*, 0', 1', 3', 5', \cdots, n' ) & \text{ if } n \text{ is odd} \\
(-1, -1^*, 0', 2', 4', 6', \dots, n') & \text{ if } n \text{ is even.} 
\end{cases}
\end{align*} 
This makes the fin triangle $\red{\circled{-1}}$ pair with the edge $-1$. There is a similar process for each of the fin triangles. If we assume that the processing of the fin triangle $\red{\circled{-1}}$ is done by performing the additions ${\red \circled{-1}} +c_1+\cdots+ c_n$, then the total number of field additions on nonzero entries is equal to 
\[
3+\sum_{i=1} ^n (i+2)=\frac{1}{2} (n^2 +5n+6).
\]
In a similar way if we process the $j$-th fin triangle $\red{\circled{-j}}$, we see that we will need to perform the addition ${\red \circled{-j}} + c_j +\dots+ c_n$, hence the total number of additions is
\[
3+\sum_{i=j} ^n (i+2)=\frac{1}{2} (n^2 +5n + -j^2-3j +10).
\]
Summing these over all $j=1, \dots, n$, we obtain that the number of additions is equal to 
\[
\frac{1}{6}n (2n^2 +9n +25).
\]
Adding this to the complexity of processing the base triangles, we find the total number of additions is
\[
S(n)=\frac{1}{3} (n^3+6n^2+26n-15).
\]
In particular, $S(n)=\Omega (n^3)$. Since the number of simplices in the complex is $N=8n+3$, it follows that the worst-case running time of the standard reduction algorithm is $\Omega(N^3)$.
\end{proof}

\begin{remark}
 In our implementation, additions are carried out at each intermediate step, so the actual number of additions performed is usually larger than $T(n)$.   
\end{remark}


\section{Experiments with the strip example}\label{sect:Experiments}

One of our motivations for introducing an alternative worst-case example is to be able to write computer programs that use worst-case examples for performance tests. The linear structure of the strip example allows us to give  a simple explicit algorithm for producing the complex $X(n)$ for every $n\geq 1$. This algorithm can be found in Appendix \ref{sect:Algorithm}. The Python code written using this algorithm and the results of the experiments that we run can be found on the github page: 
\href{https://github.com/Ergun1234/Morozov_type_Examples}{https://github.com/Ergun1234/Morozov\_type\_Examples}.

After running the Python code for constructing the strip example $X(n)$ and running the standard reduction algorithm for the boundary matrix of $X(n)$, we obtained the persistent diagrams for the strip examples.  Figure~\ref{fig:SplitSimplicialComplexes} shows the filtered simplicial complexes $X(n)$ for some values of $n$, and Figure \ref{fig:persistence-diagrams} illustrates the corresponding persistence diagrams.

\begin{figure}[htbp]
  \centering
  \begin{subfigure}{0.32\textwidth}
    \centering
    \includegraphics[width=\textwidth]{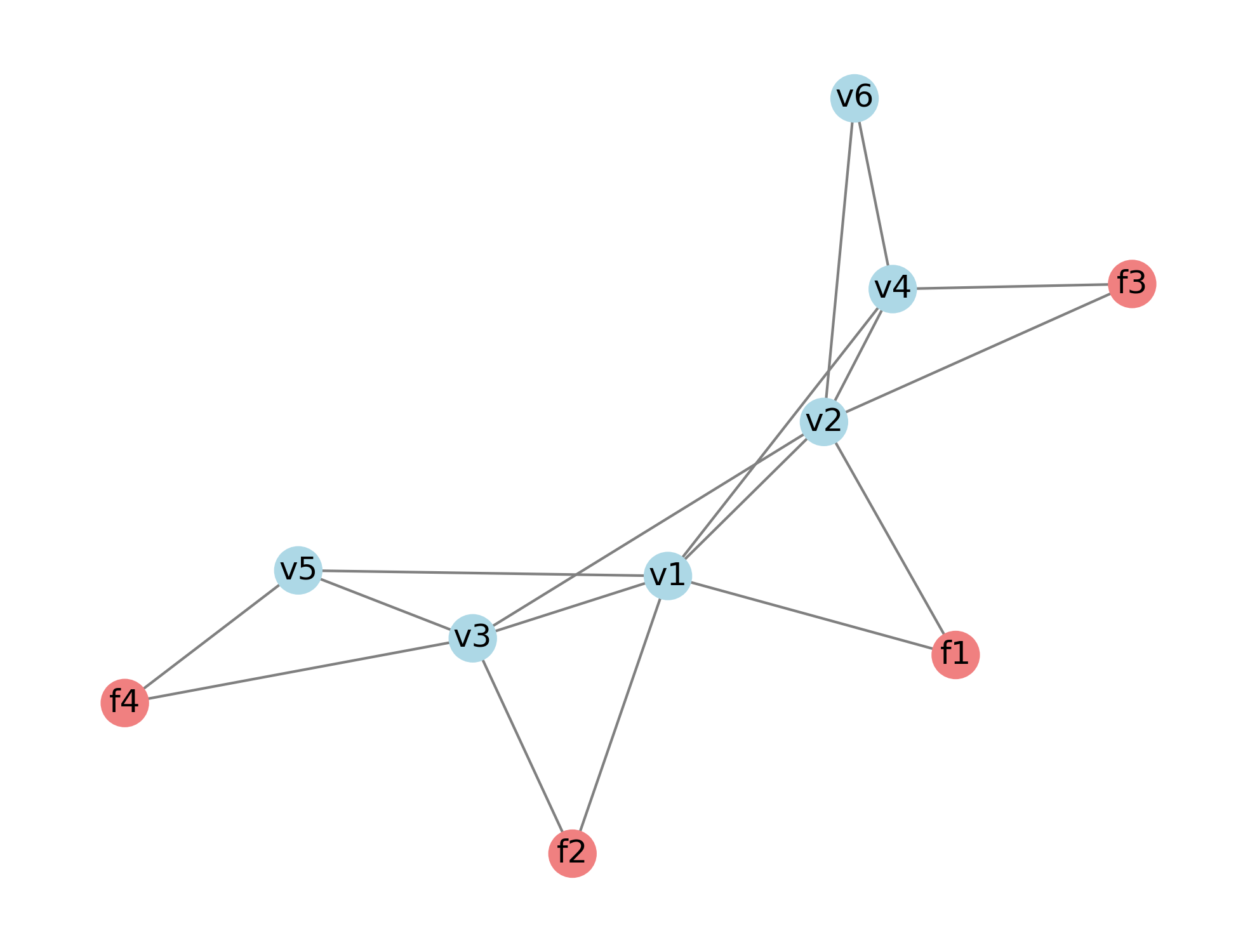}
    \caption{$n=4$}
  \end{subfigure}
  \hfill
  \begin{subfigure}{0.32\textwidth}
    \centering
    \includegraphics[width=\textwidth]{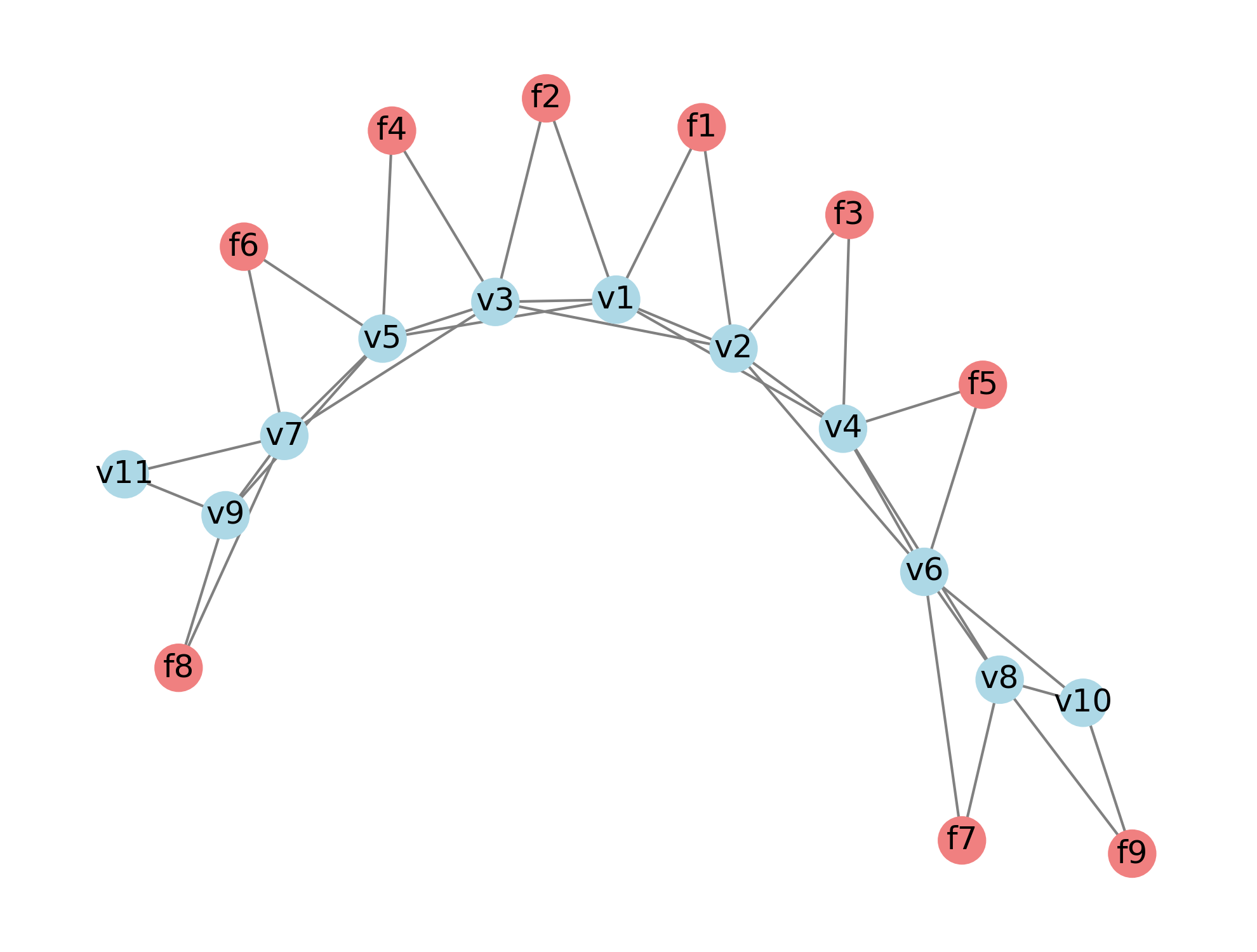}
    \caption{$n=9$}
  \end{subfigure}
  \hfill
  \begin{subfigure}{0.32\textwidth}
    \centering
    \includegraphics[width=\textwidth]{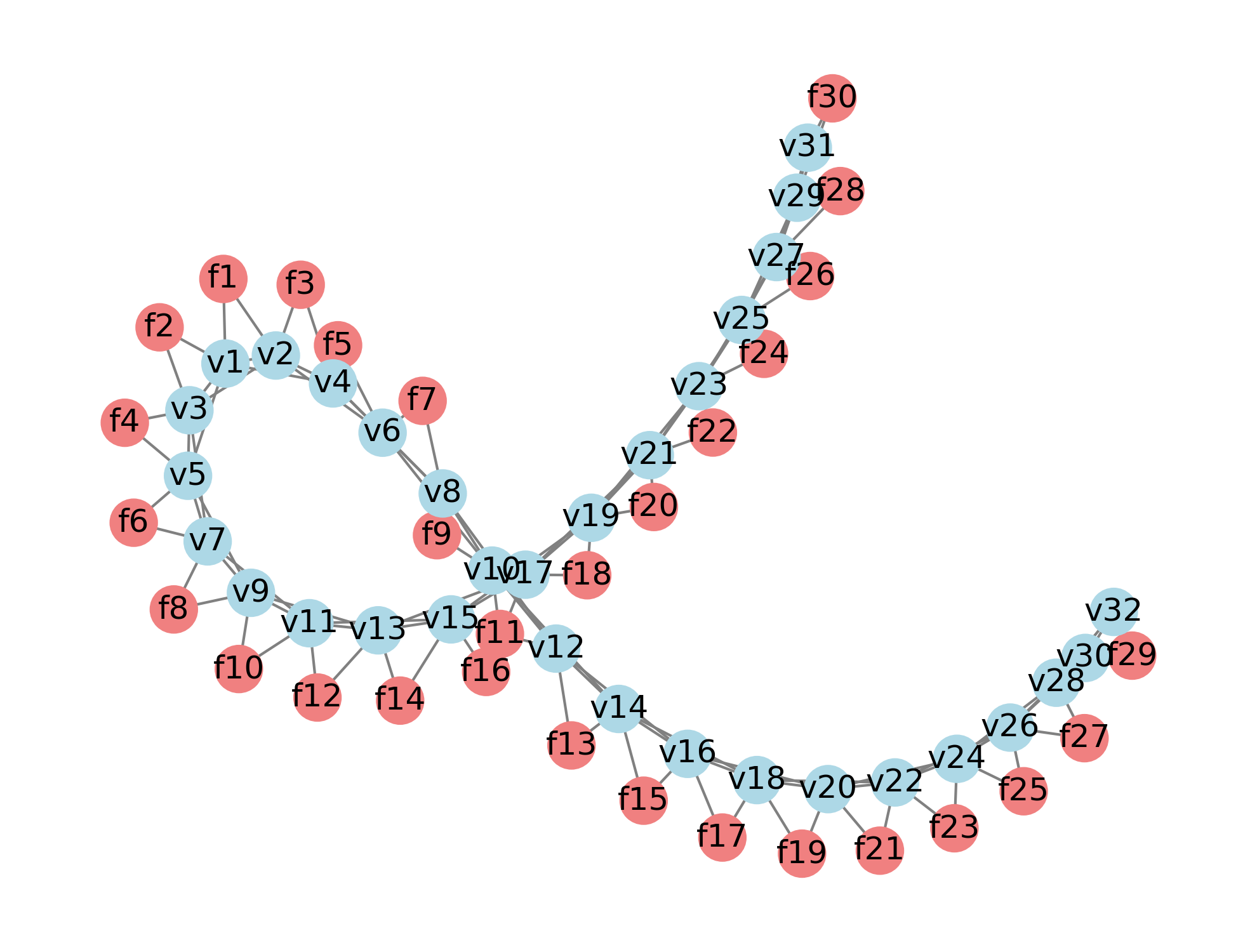}
    \caption{$n=30$}
  \end{subfigure}
  \caption{The $1$-skeletons of the strip worst-case simplicial complexes $X(n)$ for different values of $n$.}
  \label{fig:SplitSimplicialComplexes}
\end{figure}

\begin{figure}[htbp]
  \centering
  \begin{subfigure}{0.32\textwidth}
    \centering
    \includegraphics[width=\textwidth]{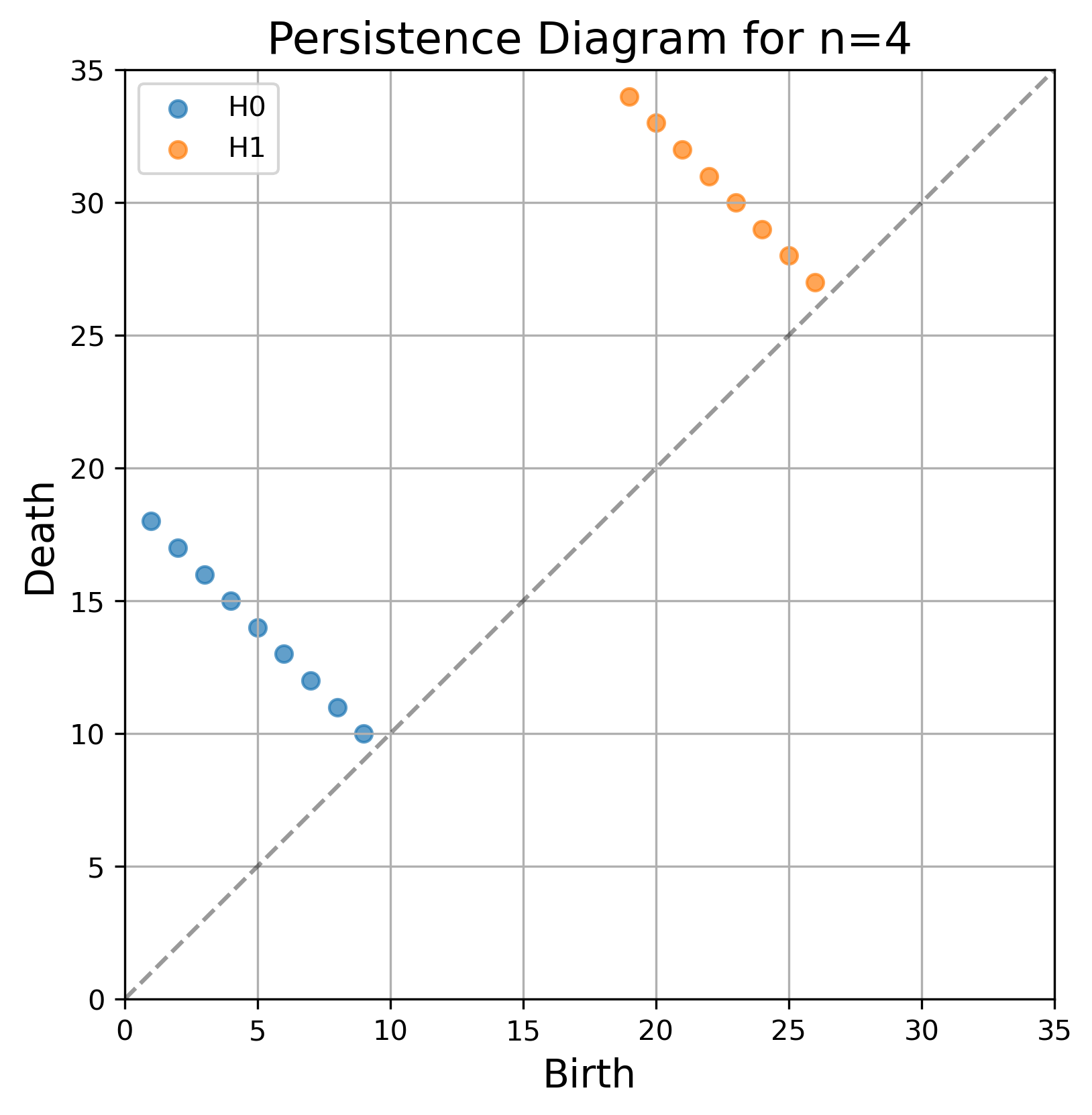}
    \caption{$n=4$}
    \label{fig:pd-n4}
  \end{subfigure}
  \hfill
  \begin{subfigure}{0.32\textwidth}
    \centering
    \includegraphics[width=\textwidth]{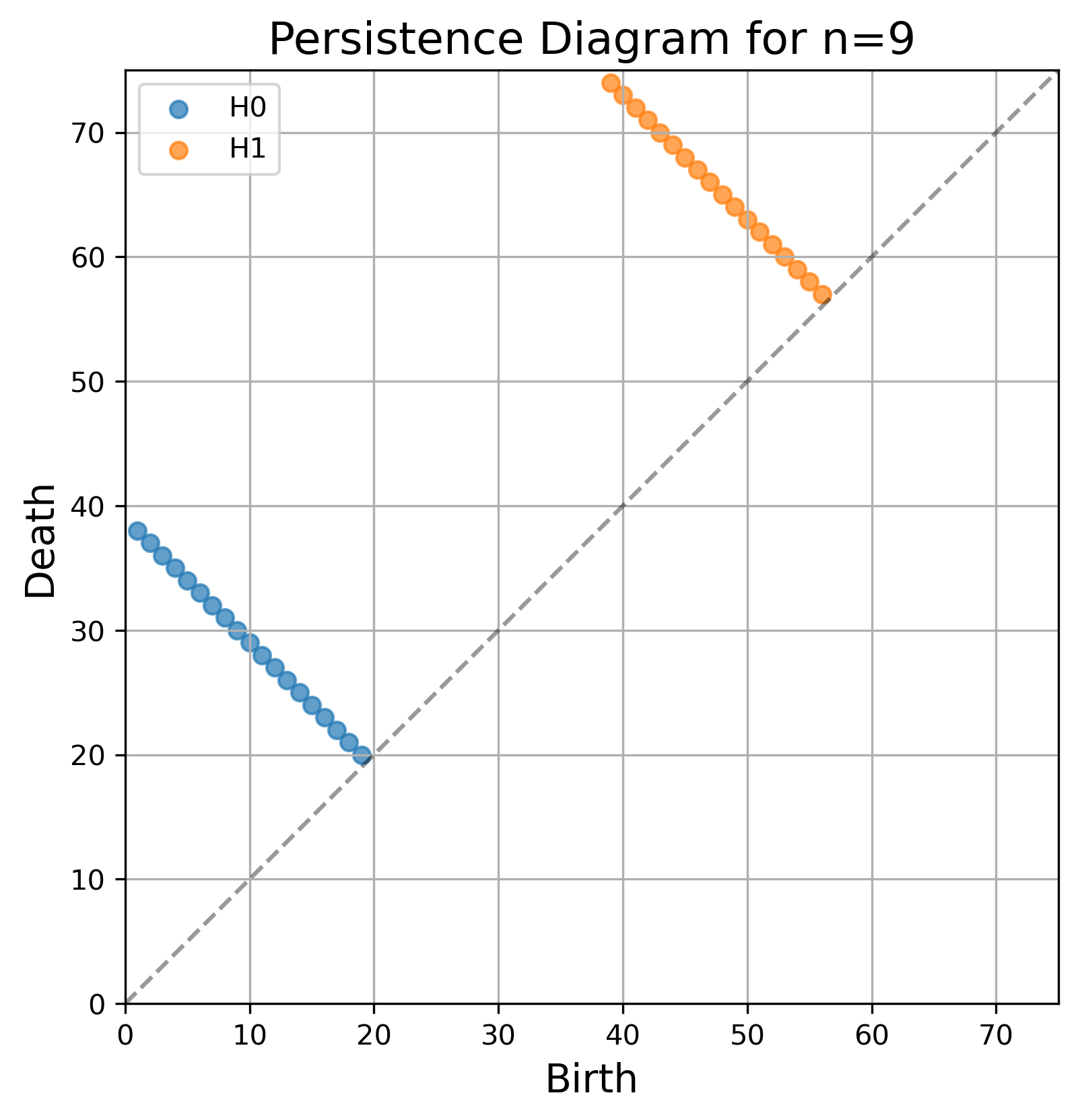}
    \caption{$n=9$}
    \label{fig:pd-n9}
  \end{subfigure}
  \hfill
  \begin{subfigure}{0.32\textwidth}
    \centering
    \includegraphics[width=\textwidth]{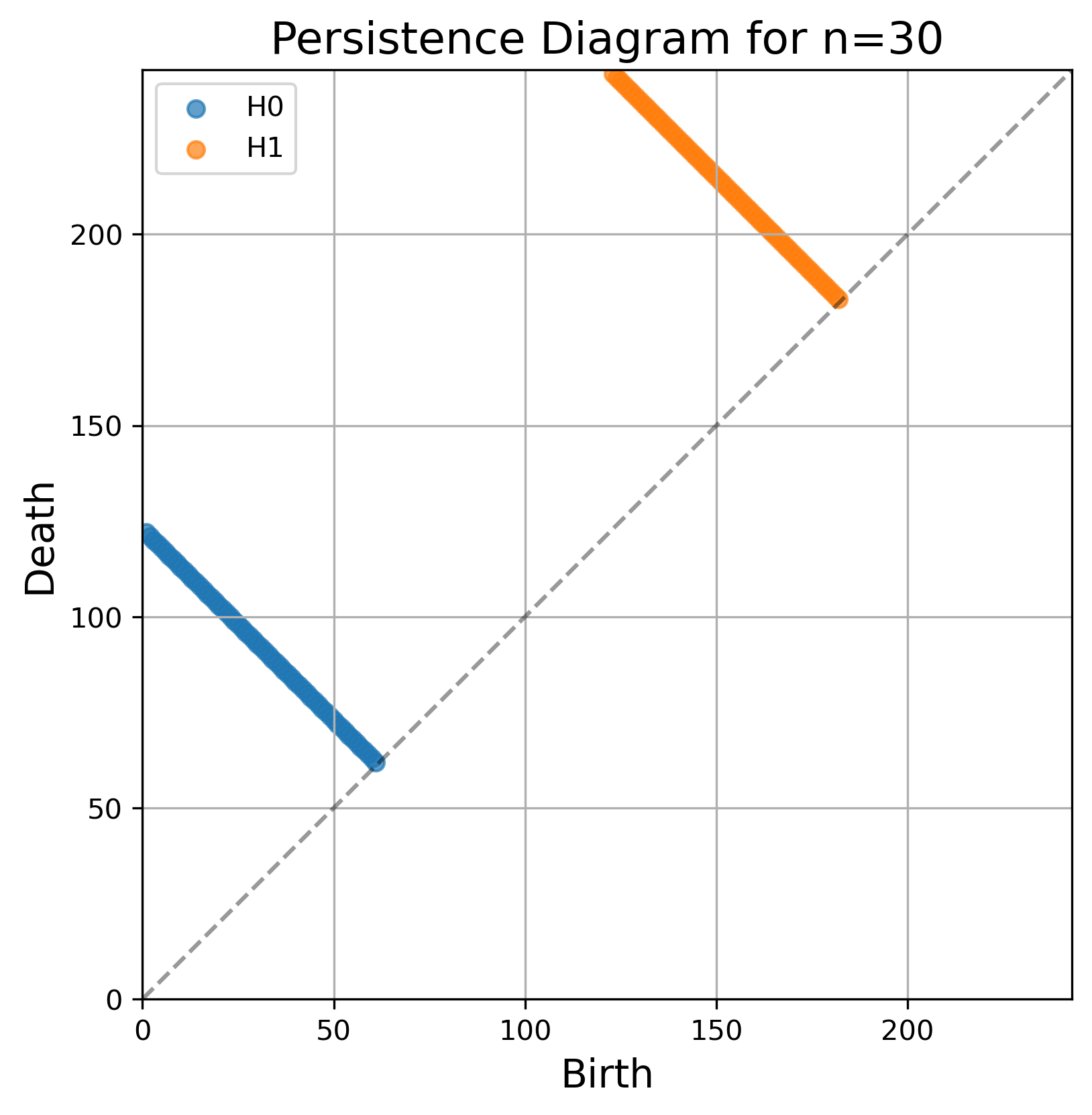}
    \caption{$n=30$}
    \label{fig:pd-n30}
  \end{subfigure}
  \caption{Persistence diagrams for strip worst-case examples $X(n)$ for different values of $n$. 
  Blue points correspond to $H_0$ features and orange points correspond to $H_1$ features.
  The dashed line indicates the diagonal $y=x$.}
  \label{fig:persistence-diagrams}
\end{figure}
 
We performed further experiments with the split worst-case examples and calculated the runtime of the reduction algorithms, the standard reduction algorithm, the reduction algorithm with a twist, and the reduction algorithm with look-ahead. Figure \ref{fig:runtime-two-panels} shows the running times of these three algorithms where the runtime is calculated using a counter that tracks the number of additions performed during the calculation of the reduced boundary matrix. Figure~\ref{fig:elapsed-two-panels} shows the elapsed running times of the algorithms on a MacBook with an M5 processor.
\begin{figure}[htbp]
  \centering
  \begin{subfigure}{0.48\textwidth}
    \centering
    \includegraphics[width=\textwidth]{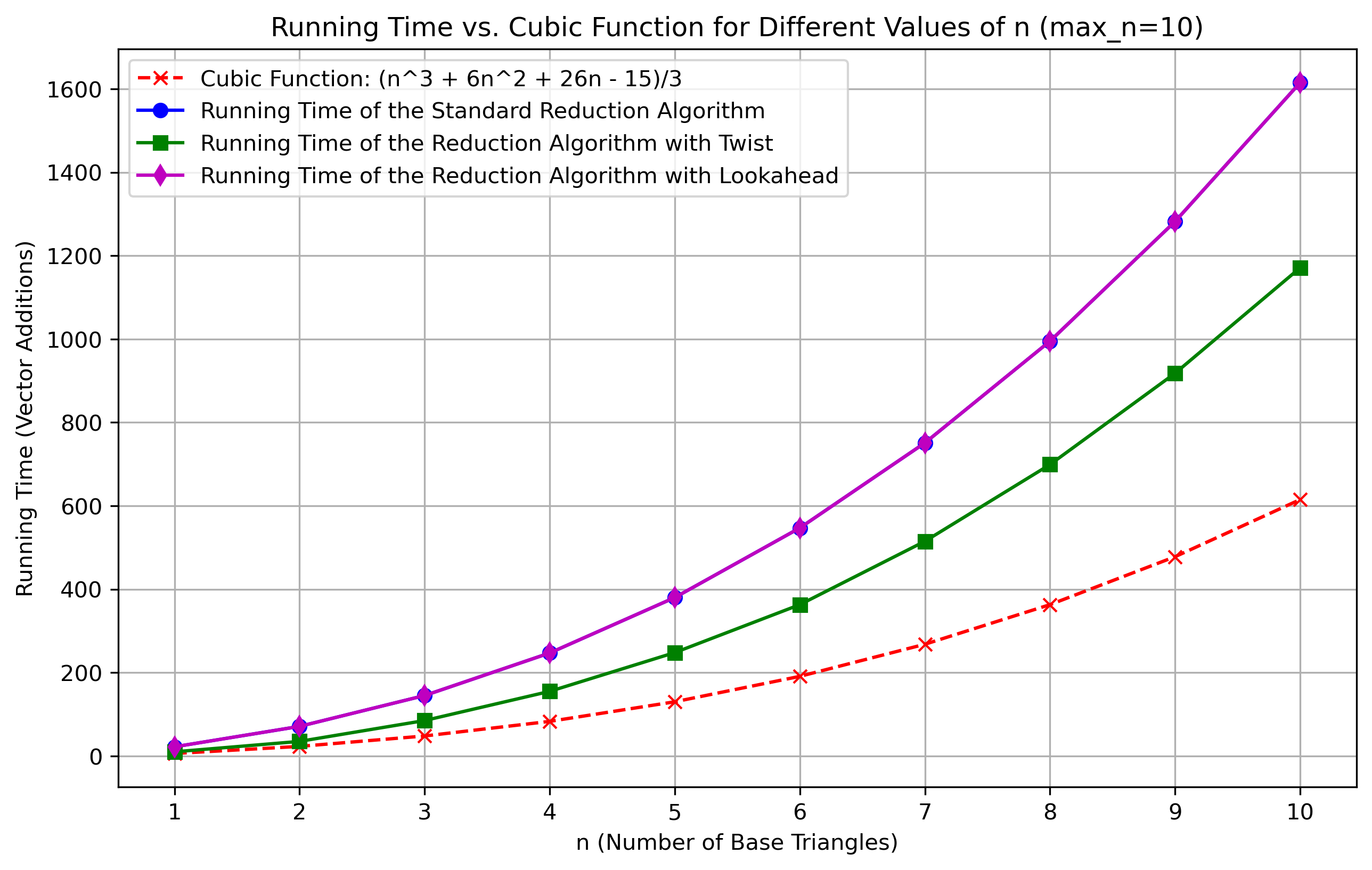}
    \caption{$\max_n=10$}
    \label{fig:runtime-n10}
  \end{subfigure}
  \hfill
  \begin{subfigure}{0.48\textwidth}
    \centering
    \includegraphics[width=\textwidth]{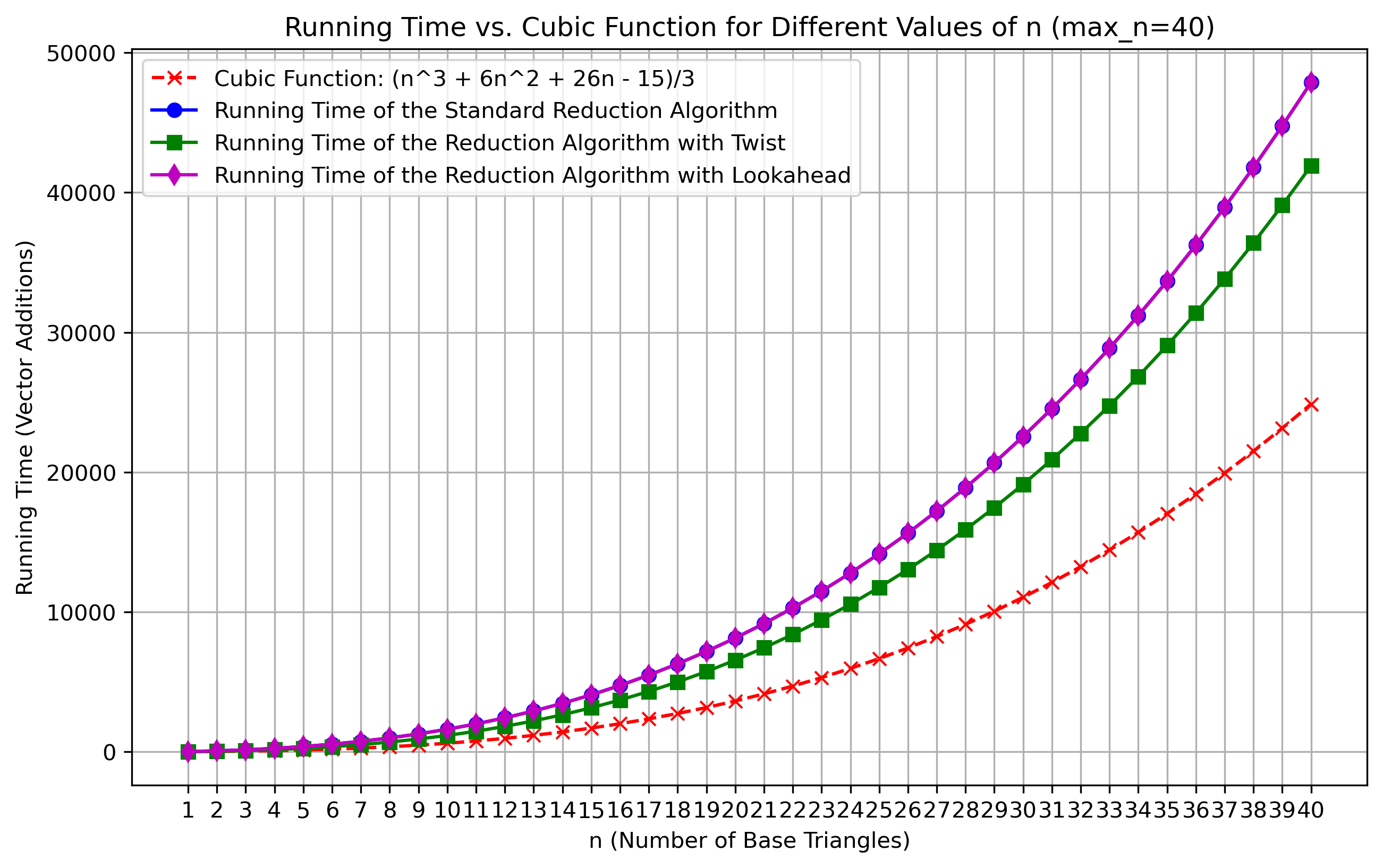}
    \caption{$\max_n=40$}
    \label{fig:runtime-n40}
  \end{subfigure}

\caption{Running times of the variants of the reduction algorithm compared with a cubic function, for two values of $\max_n$ (the maximum value of $n$ used in the experiments).}
\label{fig:runtime-two-panels}
\end{figure}

\begin{figure}[htbp]
\centering
\begin{subfigure}{0.48\textwidth}
\centering
\includegraphics[width=\textwidth]{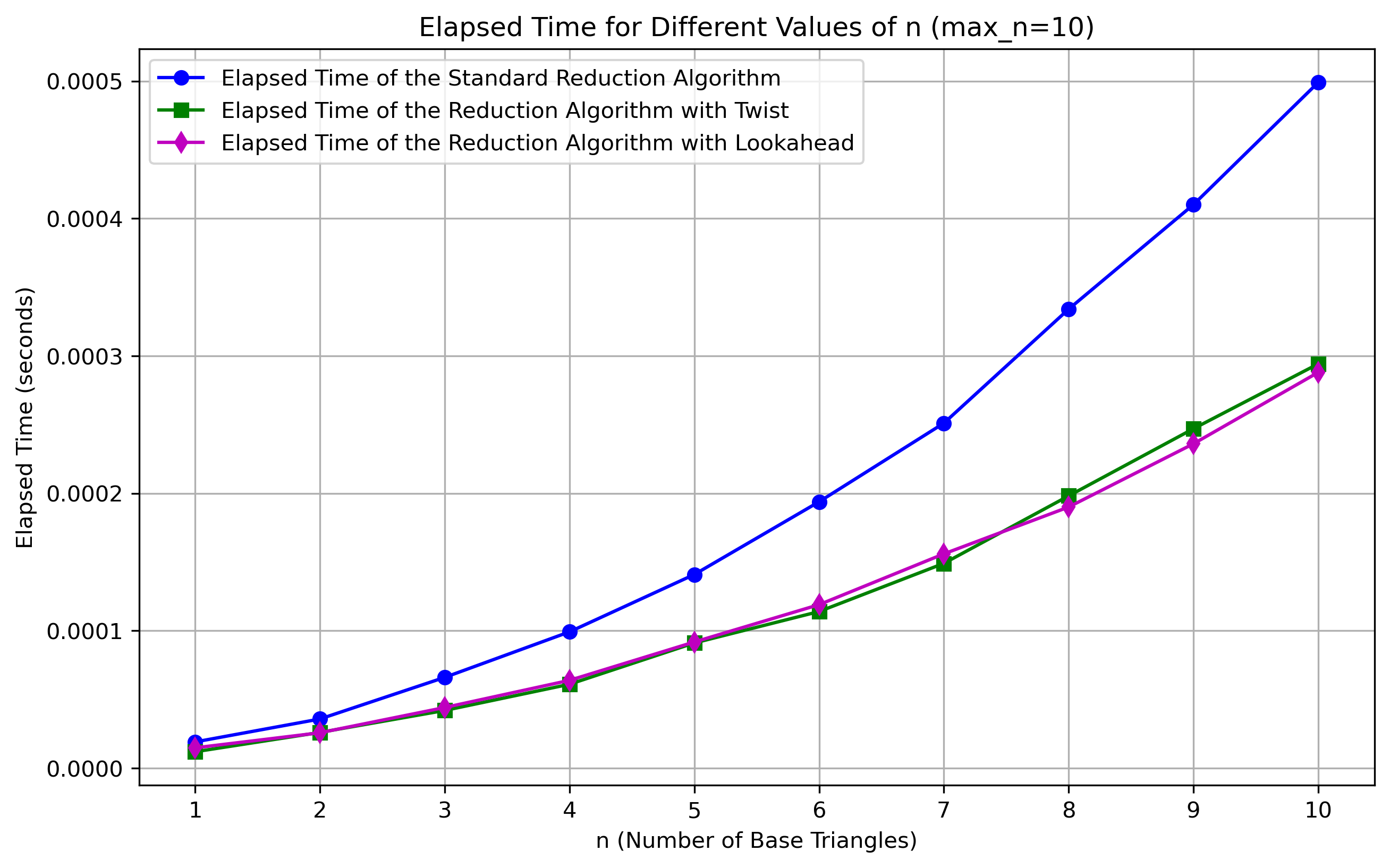}
\caption{$\max_n=10$}
\label{fig:elapsed-n10}
\end{subfigure}
\hfill
\begin{subfigure}{0.48\textwidth}
\centering
\includegraphics[width=\textwidth]{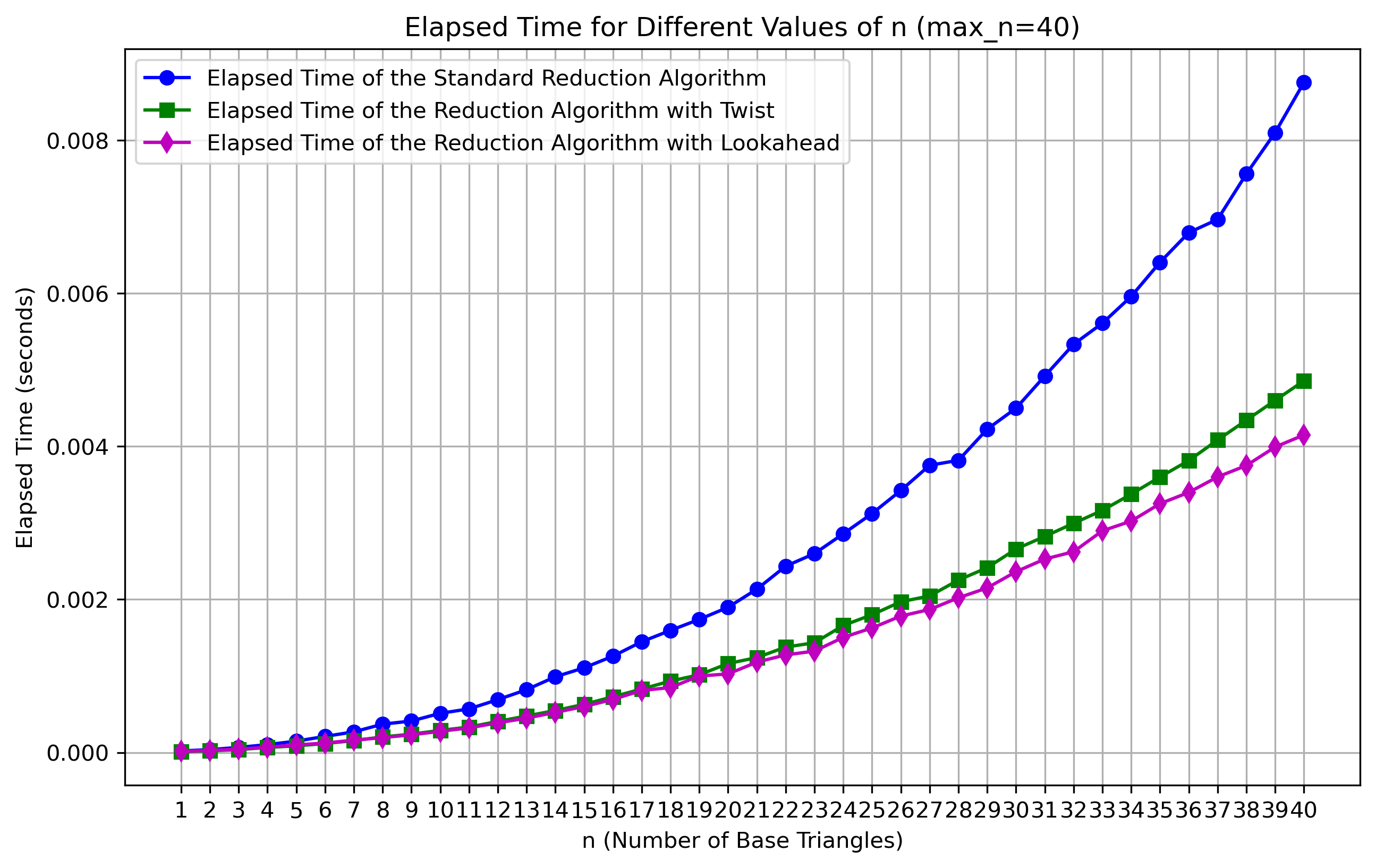}
\caption{$\max_n=40$}
\label{fig:elapsed-n40}
\end{subfigure}

\caption{Elapsed running time of different variants of the reduction algorithm as a function of the parameter $n$, for two values of $\max_n$.}
\label{fig:elapsed-two-panels}
\end{figure}

These graphs show that the running time for the standard reduction algorithm and look-ahead reduction algorithm are the same. The runtime of the reduction algorithm with twist is less than the other two algorithms. We explain the reason for this in the Example \ref{ex:ReduceB(1)}. The elapse times for these three algorithms show that both twist and look-ahead variants of the reduction algorithm perform much better than the standard algorithm when they are applied to the strip examples.   

\begin{example}\label{ex:ReduceB(1)}
  For the strip example $X(2)$, consider the matrix $B(1)$ for the boundary map $\partial _1 : C_1(X(2)) \to C_0 (X(2))$.  
\[
B(1) =\bbordermatrix{ 
& 2' & 1' & 0' & -2^* & -1^* & -2 & -1 & 2 & 1 \cr
f_1 & 0 & 0 & 0 & 0 & 1 & 0 & 1 & 0 & 0  \cr
f_2 & 0 & 0 & 0 & 1 & 0 & 1 & 0 & 0 & 0  \cr
v_1 & 1 & 0 & 0 & 1 & 1 & 0 & 0 & 1 & 1  \cr
v_2 & 0 & 1 & 1 & 0 & 0 & 0 & 1 & 0 & 1  \cr
v_3 & 0 & 1 & 0 & 0 & 0 & 1 & 0 & 1 & 0   \cr 
v_4 & 1 & 0 & 1 & 0 & 0 & 0 & 0 & 0 & 0  \cr
} 
\]     

To reduce the first five columns of the boundary matrix $B(1)$, we perform the column operations $C_1+C_3 \rightarrow C_3$ and $C_4 +C_5 \to C_5$, requiring 8 field additions. Then we carry out 9 more column additions to clear the last four columns. These operations require 36 additional field additions. If we instead apply the reduction algorithm with twist, since we first process the matrix $B(2)$, we set the last four columns of $B(1)$ to zero. This reduces the number of field additions from 44 to 8. Since the processing of $B(2)$ requires 27 field additions, the total number of field additions performed for reducing the boundary matrix decreases from $71$ to $35$. This is the drop observed for $n=2$ in the first graph in Figure \ref{fig:runtime-two-panels}. The following table shows the number of field additions performed by the variants of the reduction algorithm for $n \leq 5$.

\begin{table}[ht]
\centering
\caption{Number of field additions performed by the standard reduction algorithm and its variants.}
\label{tab:vector-additions}
\begin{tabular}{cccc}
\toprule
$n$ & Standard reduction & With twist & Look-ahead \\
\midrule
1 & 22  & 10  & 22  \\
2 & 71  & 35  & 71  \\
3 & 145 & 85  & 145 \\
4 & 247 & 155 & 247 \\
5 & 380 & 248 & 380 \\
\bottomrule
\end{tabular}
\end{table}
\end{example}

\FloatBarrier

\section{A worst-case example which is a flag complex}\label{sect:RealizingStrip}
    
In this section we show that after modifying the split complex by subdividing some edges and triangles, the resulting filtered complex is the flag complex of a filtered graph. At the end of the section we also show that these modified strip examples can be realized as the Vietoris--Rips complex of a finite data cloud for some increasing sequence of real numbers.  

\begin{definition}
Let $G=(V, E)$ be an undirected loop-free graph. The \emph{clique complex} of $G$ is a simplicial complex $Cl(G)$ whose vertex set is $V$ and a subset of vertices $\{ v_0, \dots,v_k\}$ is a simplex in $Cl(G)$ if and only if for every $i<j$, $\{v_i, v_j\}$ is an edge in $G$.
\end{definition}

Note that if $G$ is a filtered graph with filtration $\emptyset \subseteq G_1 \subseteq G_2 \subseteq \cdots \subseteq G_m$, the associated sequence of clique complexes 
$\emptyset \subseteq Cl(G_1) \subseteq Cl(G_2) \subseteq \cdots \subseteq Cl(G_m)$ gives a filtered simplicial complex. 
As defined, the split worst-case examples cannot be realized as clique complexes of filtered graphs since in the filtration the triangles appear several steps after its boundary edges. To realize them as clique complexes, we alter the split examples as follows: 
\[
\begin{tikzpicture}[scale=1.05]
\draw[] (2,0)--(0,0)--(1,1.73)--(2,0) node[midway,xshift=-24.5,yshift=-10] {$\boxed{1}$};
\draw[] (2,0)--(0,0)--(1,1.73)--(2,0) node[midway,xshift=-5,yshift=-10] {$\circled{1}$};
\draw[] (0,0)--(1,1.73)--(-1,1.73)--(0,0) node[midway,xshift=5,yshift=10] {$\boxed{2}$};
\draw[] (0,0)--(1,1.73)--(-1,1.73)--(0,0) node[midway,xshift=25,yshift=10] {$\circled{2}$};
\draw[] (4,0)--(2,0)--(3,1.73)--(4,0) node[midway,xshift=-24,yshift=-10] {$\boxed{5}$};
\draw[] (4,0)--(2,0)--(3,1.73)--(4,0) node[midway,xshift=-4.3,yshift=-10] {$\circled{5}$};
\draw[] (2,0)--(3,1.73)--(1,1.73)--(2,0) node[midway,xshift=5.8,yshift=10] {$\boxed{3}$};
\draw[] (2,0)--(3,1.73)--(1,1.73)--(2,0) node[midway,xshift=25,yshift=10] {$\circled{3}$};
\draw[] (6,0)--(8,0)--(7,1.73)--(6,0) node[midway,xshift=4.5,yshift=-12] {$\boxed{n}$};
\draw[] (6,0)--(8,0)--(7,1.73)--(6,0) node[midway,xshift=25,yshift=-12] {$\circled{n}$};
\draw[] (-4,0)--(-2,0)--(-3,1.73)--(-4,0) node[midway,xshift=5,yshift=-12,scale=0.7] {$\boxed{n-1}$};
\draw[] (-4,0)--(-2,0)--(-3,1.73)--(-4,0) node[midway,xshift=27,yshift=-12,scale=0.7] {$\circled{n-1}$};
\draw[] (6,0)--(5,1.73)--(6,1.73205)--(6,0) node[midway,xshift=-12,yshift=13, scale=0.7] {$\boxed{n-2}$};
\draw[] (6,0)--(6,1.73)--(7,1.73205)--(6,0) node[midway,xshift=-3.7,yshift=13, scale=0.7] {$\circled{n-2}$};
       
\draw[] (6,0)--(5,1.73);
\draw[] (8,0)--(7, 1.73) node[midway, yshift=5, right, scale=0.8]{$n+1=0'$};
\draw[] (5,1.73)--(6,1.73) node[midway, above, scale=0.6] {$(n-2)'$};
\draw[] (6,1.73)--(7,1.73) node[midway, above,  scale=0.6] {$(n-2)''$};
\draw[] (1,1.73)--(2,1.73) node[midway, above, scale=0.8] {$3'$};
\draw[] (2,1.73)--(3,1.73) node[midway, above, scale=0.8] {$3''$};
\draw[] (-1,1.73)--(0,1.73) node[midway, above, scale=0.8] {$2'$};
\draw[] (0,1.73)--(1,1.73) node[midway, above, scale=0.8] {$2''$};
\draw[] (0,0)--(1,0) node[midway, below, scale=0.8] {$1'$};
\draw[] (1,0)--(2,0) node[midway, below, scale=0.8] {$1''$};
\draw[] (2,0)--(3,0) node[midway, below, scale=0.8] {$5'$};
\draw[] (3,0)--(4,0) node[midway, below, scale=0.8] {$5''$};
\draw[] (6,0)--(7,0) node[midway, below,scale=0.8] {$n'$};
\draw[] (7,0)--(8,0) node[midway, below, scale=0.8] {$n''$};
\draw[] (-4,0)--(-3,0) node[midway, below,scale=0.6] {$(n-1)'$};
\draw[] (-3,0)--(-2,0) node[midway, below,scale=0.6] {$(n-1)''$};
\node[] at (2,1.73){$\bullet$};
\draw[blue] (2,0)--(2,1.73);
\node at (1,0){$\bullet$};
\draw[blue] (1,0)--(1,1.73);
\node at (0,1.73){$\bullet$};
\draw[blue] (0,0)--(0,1.73);
\node[] at (3,0){$\bullet$};
\node[] at (-3,0){$\bullet$};
\draw[blue] (3,0)--(3,1.73);
\draw[blue] (7,0)--(7,1.73);
\node[] at (7,0){$\bullet$};
\draw[blue] (-3,0)--(-3,1.73);
\draw[blue] (6,0)--(6,1.73);
            
\draw[red] (-4,0)--node[midway,yshift=-31,scale=1,xshift=-12]{$-n''$} (-4.3,4.2)-- node[midway,yshift=-36,scale=1,xshift=-10]{$-n'$} (-4.3,4.2) -- node[midway,yshift=12,xshift=6]{$-n^*$}(-3,1.73205);
\draw[red] (-3,1.73)--(-4.15,2.1);

\node[right, red, scale=0.8] at (-4.2, 2.5) {$\red{\boxed{-n}}$};
\node[right, red, scale=1] at (-4.1, 1.5) {$\red{\circled{-n}}$};

\node[left, red] at (-4.3,4.5) {$f_n$};
\node[left,red,scale=0.8,yshift=-3] at (-4.25,2.1){$g_n$};
\node[red] at (-4.15,2.1){$\bullet$};

\node[] at (-1.7,0.9){$\dots$};
\node[] at (4.7,0.9){$\dots$};

\node[] at (6,1.73){$\bullet$};
\node[above, xshift=-10] at (5,1.73){$v_{n-4}$};
\node[below, xshift=-10] at (6, 0){$v_{n-2}$};
\node[below,xshift=-10] at (-4,0){$v_{n+1}$};
\node[above] at (-3,1.73){$v_{n-1}$};
\node[below, xshift=10] at (-2, 0){$v_{n-3}$};
\node[above] at (-1,1.73) {$v_{4}$};
\node[below] at (0,0) {$v_2$};
\node[above] at (1,1.73) {$v_1$};
\node[below] at (2,0) {$v_3$};
\node[above] at (3,1.73) {$v_5$};
\node[below] at (4,0) {$v_7$};
\node[above, xshift=7] at (7,1.73) {$v_{n}$};
\node[below,xshift=10] at (8,0) {$v_{n+2}$};
\node[below,scale=0.8,blue] at (7,0){$w_n$};
\node[above,scale=0.8,blue] at (2,1.73){$w_3$};
\node[above,scale=0.8,blue] at (0,1.73){$w_2$};
\node[below,scale=0.8,blue] at (1,0){$w_1$};
\node[below,scale=0.8,blue] at (3,0){$w_5$};
\end{tikzpicture}
\]

The picture is similar to the previous example; the difference is that we add vertices $w_1, \dots, w_n$ as the midpoints of the horizontal base edges $1',\dots,n'$, and denote the resulting half edges as $i'$ and $i''$ for $i=1, \dots, n$. Similarly, we add  vertices $g_1, \dots, g_n$ at the midpoints of the edges $-1,\dots,-n$ of the fin triangles and denote the resulting half edges as $-i'$ and $-i''$ for $i=1, \dots, n$.  
    
We also add new edges that divide the base and fin triangles into half. For base triangles, these are denoted by $e^b_1,\dots,e^b_n$ and for the fin triangles, by $e^f_1,\dots,e^f_n$. They are indexed by the triangles that they subdivide. For the divided triangles, we denote the left one as boxed $i$ and the right one as the circled $i$, for base triangles. They are denoted by boxed $-i$ and circled $-i$ for fin triangles. We have the following filtration for this modified simplicial complex:
\begin{enumerate}
\item  Fin vertices $f_1,\dots, f_n$, and $g_1, \dots, g_n$,
\item  Base vertices $v_1, \dots, v_{n+2}$, and $w_1, \dots, w_n$,
\item Horizontal base edges $n'',\dots,1''$, and $n', \dots, 1'$,
\item Base edge $n+1$ which is also labeled as $0'$,
\item Fin edges that destroy fin components 
$-n^{*},\dots,-1^{*}$, 
\item Fin edges that create cycles $-n'',\dots,-1''$, and $-n', \dots, -1'$ 
\item Vertical base edges $n, n-1, \dots, 1$, 
\item Base midlines and base triangles $e^b _1, \boxed{1}, \circled{1},\dots, e^b_n , \boxed{n}, \circled{n}$, 
\item Fin midlines and fin triangles $e^f _1, \boxed{-1}, \circled{-1}, \dots,  e^f_n, \boxed{-n}, \circled{-n}$.
\end{enumerate}

Note that for all $i$, the edges $e^b _i$ and the base triangles $\boxed{i}$ and ${\circled{i}}$ appear at the same time in the filtration. Similarly for the fin triangles,
the edges $e^f _i$ and the fin triangles $\red{\boxed{-i}}$ and $\red{\circled{i}}$ appear at the same filtration level. We call this modified example the \emph{modified split example} and denote it by $Y(n)$. 

For each $k=0,1,2$, let $B'(k)$ denote the boundary matrix for $\partial _k: C_k (Y(n) )\to C_{k-1} (Y(n))$.  For the boundary matrix, we use the linear ordering as given in the above list.

\begin{example}\label{ex:B'(2)}
In this example we illustrate how the boundary matrix $B'(2)$ for the modified split example $Y(2)$ is reduced using the reduction algorithm. We explain the processing of the columns  using the sparse matrix notation:
\begin{align*}
\boxed{1} & \to (e_1 ^b, 1, 1') \\
d_1= \boxed{1}+ \circled{1} & \to (e_1 ^b, 1, 1')
+(e_1^b, 2, 1'') \to (1, 2, 1', 1'')
\end{align*}
So the triangle $\boxed {1}$ is paired with $e_1^b$, and the triangle $\circled{1}$ is paired with edge $1$.  For the triangles $\boxed{2}$ and $\circled{2}$, we have 
\begin{align*} \boxed{2} & \to (e_2^b, 0', 2') \\
\boxed{2} + \circled{2} & \to (e_2 ^b, 0', 2')+ 
(e_2^b, 1, 2'') \to (1, 0', 2', 2'') \\
d_2= (\boxed{2}+ \circled{2})+d_1& \to (1, 0', 2', 2'')
+ (1, 2,1', 1'') =(2, 0', 1', 1'', 2', 2'').
\end{align*}
This shows that the triangle $\boxed{2}$ is paired with $e_2 ^b$ and the triangle $\circled{2}$ is paired with $2$. For the fin triangles we have a similar pairing process. 
\begin{align*}
{\red \boxed{-1}} & \to (e_1^f, -1', -1^*) \\
{\red \boxed{-1}} + {\red \circled{-1}} & \to (e_1^f, -1', -1^*) + (e_1^f, 1, -1'') \to (1, -1', -1'', -1^*)  \\
({\red \boxed{-1}} + {\red \circled{-1}}) +d_1 &\to(1, -1', -1'', -1^*)  + (1, 2, 1', 1'') \to (2, -1', -1'', -1^*, 1', 1'') \\
({\red \boxed{-1}} + {\red \circled{-1}} +d_1)+d_2 & \to(2, -1', -1'', -1^*, 1', 1'') +(2, 0', 1', 1'', 2', 2'') \\
& \to (-1', -1'', -1^*, 0', 2', 2'').
\end{align*}
This pairs the fin triangle $\red \boxed{-1}$ with $e_1^f$ and the fin triangle $\red \circled{-1}$ with the edge $-1'$. Finally we have
\begin{align*}
{\red \boxed{-2}} & \to (e_2^f, -2', -2^*) \\
{\red \boxed{-2}}+ {\red \circled{-2}} & \to 
(e_2^f, -2', -2^*) + (e_2^f, 2, -2'') \to (2, -2', -2'', -2^* ) \\    
({\red \boxed{-2}}+{\red \circled{-2}})+ d_2 & \to (2, -2', -2'',  -2^*)+ (2, 0', 1', 1'', 2', 2'') \\ 
& \to (-2',-2'', -2^*, 0', 1', 1'', 2', 2'').  
\end{align*}
This pairs the triangle $\red \boxed{-2}$ with $e_2^f$ and the triangle $\red \circled{-2}$ with the edge $-2'$.
So the persistent pairs for $Y(2)$ for the 1-dimensional homology $H_1$ are
\[
\{ (e_1 ^b, \boxed{1}), (1, \circled{1}), (e_2 ^b , \boxed{2}), (2, \circled{2}), (e_1^f, {\red \boxed{-1}} ), (-1', {\red \circled{-1}}), (e_2 ^f, {\red \boxed{-2}}),  (-2', {\red \circled{-2}} ) \}.
\]
Since some of the pairs consist of simplices that appear at the same filtration level, they will appear as points on the diagonal line in the persistent diagram.

\end{example}

Now we are ready to prove the following:

\begin{proposition}\label{pro:Modified}
The modified strip example $Y(n)$ is the clique complex of a filtered graph and it has time complexity $\Omega({M^3})$ where $M$ is the number of simplices. 
\end{proposition}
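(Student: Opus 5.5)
We split the proof into two parts: showing that $Y(n)$ is a flag (clique) complex compatibly with the filtration, and showing the cubic lower bound.

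\textbf{Flag property.} Let $G$ be the $1$-skeleton of $Y(n)$, filtered by the levels at which its edges appear in the list above; each $G_i$ is the $1$-skeleton of the corresponding stage $Y(n)_i$. We must show that for every $i$ the $3$-cliques of $G_i$ are exactly the triangles of $Y(n)_i$, and that $G$ has no $4$-cliques.

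First we show that $Y(n)$ contains no \emph{empty triangles}, i.e.\ $3$-cycles in $G$ that do not bound a $2$-simplex. In $X(n)$ every edge cycle of length three bounds a base or fin triangle, and there are no other $3$-cycles because the strip is planar and the fins meet the plane only along vertical edges. After subdividing, each old triangle is split by the midline $e^b_i$ (from $w_i$ to the opposite vertex) or $e^f_i$ (from $g_i$ to the opposite vertex). The new $3$-cycles are then exactly the boundaries of the boxed and circled halves. The old boundary cycles now have length four, because one edge is subdivided, so they are no longer $3$-cycles. A short check on the vertex $w_i$, which is adjacent only to the two endpoints of $i'$ and the apex, and on $g_i$, which is adjacent only to the endpoints of $-i$ and its apex, shows that no other $3$-cycles exist. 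Each face of $Y(n)$ has all its edges in $G$, and $G$ has no $4$-cliques because $Y(n)$ is $2$-dimensional and no tetrahedron's edges appear in $G$.

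Next we handle the filtration. At a given stage, the two halves $\boxed{i}$ and $\circled{i}$ each have one edge, $e^b_i$, that enters at that very level together with them, while all their other edges are older. So the halves become cliques exactly when $e^b_i$ appears. Without the midline, the old triangle boundary is a $4$-cycle and not a clique, so no triangle enters early. The fin triangles are handled identically using $e^f_i$. It follows that $Cl(G_i)=Y(n)_i$ for every $i$.

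\textbf{Complexity.} Following Example~\ref{ex:B'(2)}, we generalize the reduction of $B'(2)$ to $B'(n)$, mirroring the proof of Theorem~\ref{thm:Runtime}.

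For each $i$, $\boxed{i}$ is paired with $e^b_i$. Then $\boxed{i}+\circled{i}$ has boundary equal to the old boundary of $\circled{i}$ with $i'$ replaced by $i', i''$, and $e^b_i$ cancelled. Since $e^b_i$ comes after all the other edges in the ordering, this low is the same as before. By induction, $d_i = d_{i-1}+\boxed{i}+\circled{i}$ has boundary $(i,i+1,1',1'',\dots,i',i'')$ and is paired with the edge $i$; its support size is roughly twice $|c_i|$.

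For the fin triangles, $\red{\boxed{-j}}$ pairs with $e^f_j$. The column $\red{\boxed{-j}}+\red{\circled{-j}}$ then has low $j$, so it must be successively reduced by $d_j,\dots,d_n$, exactly as in the proof of Theorem~\ref{thm:Runtime}. Each addition of $d_i$ costs at least $|d_i|\ge i$ field operations. The total work is therefore at least
\[
\sum_{j=1}^n\sum_{i=j}^n i=\Omega(n^3).
\]

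It remains to count simplices. $Y(n)$ has $4n+2$ vertices, $4n+1+2n+2n=8n+1$ edges and $4n$ triangles, so $M=16n+3$. Hence $\Omega(n^3)=\Omega(M^3)$.

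\textbf{Main obstacle.} The delicate point is verifying that the reduction order and lows really behave as in the unmodified case. We need that $e^b_i$ and $e^f_i$ cancel at the first step, and that the extra $''$ edges never become lows. The second holds because the $''$ edges come early in the ordering, before all the vertical and fin edges. We would verify both by induction on $i$, stating the boundary of $d_i$ explicitly as above.
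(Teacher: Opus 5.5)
Your proposal is correct and follows the paper's route. Each half-triangle enters together with its midline, so every stage is the clique complex of its $1$-skeleton; in the reduction, each boxed half pairs with its midline, the chains $d_i$ pair with the vertical edges $i$, and each fin chain $t_{-j}$ is reduced successively by $d_j,\dots,d_n$, giving $\Omega(n^3)=\Omega(M^3)$ with $M=16n+3$. Your degree-three check at $w_i$ and $g_i$, which rules out empty $3$-cycles, makes the flag argument more complete than the paper's one-line justification, and your cruder bound $\sum_j\sum_{i\ge j} i$ suffices in place of the exact count $S'(n)$. One small slip: the fin half-edges $-j''$ do not precede all fin edges, but they still never become lows because $-j'$ comes after them in the ordering.
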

    
\begin{proof} At each filtration level the modified  strip example is the clique complex of its one skeleton because all the triangles in the complex appear at the same filtration level as the boundary edges of the triangles. 

To see that the reduction algorithm applied to $B'(n)$ has cubic runtime complexity, observe that the complex $Y(n)$ has 
$4n+2$ vertices, $8n+1$ edges, and $4n$ triangles. So the number of simplices in $Y(n)$ is $M=16n+3$.
Note that in the reduction algorithm, the boxed triangles are always paired with the edge that divides the triangle into two. To obtain the chains $t_i=\boxed {i} +\circled {i}$ and $t_{-i}= {\red \boxed{-i}}+{\red \circled{-i}}$ we perform $2n$ vector additions which is performed using $12n$ field additions. 

The processing of the chains $t_i$ and $t_{-i}$ are very similar to the processing of the triangles in the original strip example. In fact we use the same number of vector additions to process these chains. The only difference is that the vectors added have more nonzero entries since more edges are introduced in the modified case. This increases the number of field additions in the reduction process. We now calculate exact number of fields additions used to process columns for the boundary matrix $B'(n)$.   

Let $d_i =t_1+\cdots+t_i$. Note that the columns for $d_i$ have $2i+2$ nonzero entries. Since the column for $d_i$ is obtained by adding the columns $t_i$ and $d_{i-1}$, the number of field additions to process all the columns for $t_1, \dots, t_n$ is
\[
(4+4)+(6+4)+(8+4)+\cdots+(2n+4)=n^2+5n-6.
\]

To process the column for $t_{-j}$, we perform the additions $t_{-j} +d_j + \cdots+ d_n$ which requires 
\[
4 + \sum _{i=j} ^n (2i+2)=n^2 +3n-j^2-j+6
\]
field additions. Adding these over all $j=1, \dots, n$, we see that to process the columns for fin triangles, we perform 
$ \frac{2}{3} n (n^2+3n+8)$ additions. Hence the total number of field additions to reduce the matrix $B'(2)$ is 
\[
S'(n)= \frac{1}{3} ( 2n^3+9n^2+37n-18).
\]
Since $M=16n+3$, we have $S'(n)=\Omega (M)$. Note that both the number of simplices and the number of field additions in the reduction process increases roughly by a factor of $2$ for the modified complex $Y(n)$ compared to the reduction process for the strip example $X(n)$. 
\end{proof}

Next we show that the modified strip examples can be realized as the Vietoris--Rips complex of a finite data cloud for a sequence of real number $0<r_1< \cdots < r_s$ as scale parameters.
We first recall the definition of a Vietoris--Rips complex of a data cloud.

\begin{definition} Let $X$ be a finite set of points in $\mathbb R ^l$. For a real number $r\geq 0$, the \emph{Vietoris--Rips complex} $VR (X, r)$ is defined as the simplicial complex whose vertex set is $X$, and whose $n$-simplices are the subsets $\{ x_0, \dots , x_n\}$ of $X$ such that $d( x_i, x_j ) \leq r$ for every $i, j$.
\end{definition}

For an increasing sequence $0 < r_1 < r_2 < \cdots < r_m$ of positive real numbers, we can define a filtered simplicial complex by taking the simplicial complex $K_i$ to be the Vietoris--Rips complex $VR(X, r_i)$.  The following proposition is implicit in classical distance geometry and the theory of Vietoris--Rips complexes, but  we were unable to find a direct reference. We include a proof here for completeness.

\begin{proposition}\label{pro:Realization}
Let $\emptyset \subseteq G_1 \subseteq \cdots \subseteq G_m
$ be a filtered graph on a finite vertex set $V=\{1,\dots,n\}$, and let $\mathrm{Cl}(G_t)$ denote the clique
complex of $G_t$ for $t=1,\dots,m$. Then there exist points
\[
X=\{p_1,\dots,p_n\}\subset \mathbb{R}^l
\] 
and increasing sequence of real numbers $0<r_1<\cdots<r_m$ such that for each $t=1,\dots, m$, there is an isomorphism of simplicial complexes
\[
VR(X,r_t) \cong \mathrm{Cl}(G_t).
\]
\end{proposition}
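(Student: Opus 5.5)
The idea is to realize an arbitrary prescribed distance matrix with a controlled pattern of small and large entries, using the fact that any metric close enough to a constant metric on $n$ points embeds isometrically in Euclidean space. Let $g(e)=\min\{t : e\in G_t\}$ for each edge $e=\{i,j\}$ of $G_m$, and set $g(e)=m+1$ if $e\notin G_m$. The target distances are
\[
d_{ij} = 1 + \varepsilon\, g(\{i,j\}), \qquad i\neq j,
\]
with $d_{ii}=0$ and a small $\varepsilon>0$ to be chosen. The scale parameters are $r_t = 1+\varepsilon t$ for $t=1,\dots,m$. These are increasing and positive. Moreover $d_{ij}\le r_t$ holds exactly when $g(\{i,j\})\le t$, that is, exactly when $\{i,j\}\in G_t$.

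Once the points $p_i\in\mathbb{R}^l$ are found with $\|p_i-p_j\|=d_{ij}$, the rest is formal. The $1$-skeleton of $VR(X,r_t)$ is $G_t$ under $i\mapsto p_i$. Both $VR(X,r_t)$ and $\mathrm{Cl}(G_t)$ are flag complexes determined by their $1$-skeleta, since a set of vertices spans a simplex iff all pairs do. Hence $VR(X,r_t)\cong \mathrm{Cl}(G_t)$ for every $t$, and the isomorphisms are compatible with the inclusions.

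The main step is embedding the metric $d$ in Euclidean space. I would use Schoenberg's criterion: a symmetric matrix $(d_{ij})$ with zero diagonal is realized by points in $\mathbb{R}^{n-1}$ iff the Gram matrix
\[
M_{ij} = \tfrac12\left(d_{in}^2 + d_{jn}^2 - d_{ij}^2\right), \qquad 1\le i,j\le n-1,
\]
is positive semidefinite. For $\varepsilon=0$ all off-diagonal distances equal $1$, which is the regular simplex. Then $M^{(0)}=\tfrac12(I+J)$, where $J$ is the all-ones matrix, and this is positive definite with smallest eigenvalue $\tfrac12$. The entries of $M$ depend continuously on $\varepsilon$, and each changes by $O(\varepsilon m)$. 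So by Weyl's inequality, or Gershgorin, $M$ stays positive definite once $(n-1)\cdot C\varepsilon(m+1)<\tfrac12$ for an explicit constant $C$. It is cleanest to give this explicit bound on $\varepsilon$, for instance $\varepsilon < 1/(8n(m+1))$. I would then factor $M=P^TP$ by Cholesky and take $p_n=0$ and $p_i$ the $i$-th column of $P$. This gives points in $\mathbb{R}^{n-1}$ with the required distances, so $l=n-1$ works.

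The only delicate point is this perturbation estimate, and it needs just one line of care. Here $d_{ij}^2 = 1+2\varepsilon g+\varepsilon^2 g^2$, so each entry of $M-M^{(0)}$ has absolute value at most $\tfrac32\big(2\varepsilon(m+1)+\varepsilon^2(m+1)^2\big)$. Gershgorin then bounds the eigenvalue shift by $n$ times this quantity, which is below $\tfrac12$ for the stated choice of $\varepsilon$. An alternative that avoids Schoenberg is to start from the standard simplex vertices $e_i\in\mathbb{R}^n$, but the Gram-matrix route is the most direct.
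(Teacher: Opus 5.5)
Your proof is correct, and it shares the paper's overall strategy: perturb an equidistant configuration by amounts ordered by appearance level, and certify Euclidean realizability through a positive definite Gram matrix. The difference is where the perturbation is placed.

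The paper prescribes the Gram matrix directly. It sets $K_{ii}=1$ and $K_{ij}=\delta_{\ell(i,j)}$ with $\delta_t=2^{-t}/(n-1)$, and $\delta=0$ for pairs that never become edges. Then $K$ is diagonally dominant, so Gershgorin alone gives positive definiteness. The resulting unit vectors satisfy $\|p_i-p_j\|^2=2-2\delta_{\ell(i,j)}$, so distances are automatically monotone in the appearance level, and each $r_t$ is chosen strictly between consecutive distance values.

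You instead prescribe the distances near a regular simplex. That forces you through Schoenberg's criterion, or at least the direct check via $M=P^TP$, and through a perturbation estimate. The paper's route buys economy: no Schoenberg and no Weyl. Yours buys explicit linear scales $r_t=1+\varepsilon t$ and the explicit dimension $l=n-1$. You also note that all the isomorphisms come from the single bijection $i\mapsto p_i$ and therefore commute with the inclusions, which the paper leaves implicit.

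One point needs more precision. Gershgorin applied to $M$ itself cannot certify positive definiteness. The rows of $M^{(0)}=\frac12(I+J)$ have diagonal entry $1$ and off-diagonal sum $\frac{n-2}{2}$, so for $n\geq 4$ the discs reach $0$.

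Your estimate is valid when read in either of two ways:
\begin{itemize}
\item as Weyl's inequality combined with the bound $\|E\|_2\le\max_i\sum_j|E_{ij}|$ for the symmetric matrix $E=M-M^{(0)}$;
\item by writing $M=(\frac12 I+E)+\frac12 J$ and applying Gershgorin to the first summand, using that $J$ is positive semidefinite.
\end{itemize}
State it in one of these forms rather than as ``Weyl or Gershgorin''. With that reading your constants check out: for $\varepsilon<1/(8n(m+1))$ the eigenvalue shift is at most $\frac38+\frac{3}{128n}\le\frac{51}{128}<\frac12$.
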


\begin{proof}
It is well-known that Vietoris--Rips complexes are flag complexes, hence they are clique complexes of underlying 1-dimensional subcomplexes (see for example \cite[Section~III.2]{EdelsbrunnerHarer-2010}). Therefore it suffices to realize the
filtered graphs $G_i$ as nested distance--threshold graphs.

Since the $n=1$ case is trivial, we can assume $n\geq 2$. For each unordered pair $\{i,j\}$, define its \emph{appearance level}
\[
\ell(i,j)=\min\{t : \{i,j\}\in E(G_t)\},
\]
with $\ell(i,j)=\infty$ if the edge never appears. Note that $\ell (i, j) \geq 1$ for all $i,j$. Fix
\[
\varepsilon := \frac{1}{n-1},
\qquad
\delta_t := 2^{-t}\varepsilon
\]
for $t=1,\dots,m$, and let $\delta_{m+1} := 0$.
Then we have $\frac12 \geq \delta_1>\cdots>\delta_m>\delta_{m+1}=0$ and for each $i$,
\[
\sum_{j\neq i} \delta_{\ell(i,j)}
\le
(n-1)\delta_1
= (n-1) \frac{\epsilon}{2} =
\frac12.
\]

Define an $n\times n$ symmetric matrix $K=(K_{ij})$ by
\[
K_{ii}=1, \qquad K_{ij}=\delta_{\ell(i,j)} \ \text{ for all }\ i\neq j.
\]
By Gershgorin's circle theorem (see, for example, \cite{Shores-2018}), every eigenvalue of $K$ lies in the interval
$[1-\frac12,\,1+\frac12]$. In particular $K$ is positive definite.

By a standard result in linear algebra, every real symmetric positive definite matrix is a Gram matrix.
Thus there exist vectors $p_1,\dots,p_n\in\mathbb{R}^l$ such that
\[
K_{ij}=\langle p_i,p_j\rangle
\]
(see \cite[Theorem~7.35]{Axler-2015}). In particular, $\|p_i\|=1$ for all $i$, and
\[
\|p_i-p_j\|^2
= \|p_i\|^2+\|p_j\|^2-2\langle p_i,p_j\rangle
= 2-2\delta_{\ell(i,j)}.
\]

For each $t=1,\dots, m+1$, define 
\[
d_t := \sqrt{2-2\delta_t}.
\]
For all $i,j$, we have $\| p_i-p_j \| =d_{\ell(i,j)}$.
Since the sequence $\delta_t$ is strictly decreasing, the sequence $d_t$ is strictly increasing. We have
$$ 1 \leq d_1 < d_2 < \cdots < d_m < d_{m+1}=\sqrt{2}.$$
For each $t=1,\dots,m$, choose a radius $r_t$ such that
\[
d_t < r_t < d_{t+1}.
\]
Then
\[
\|p_i-p_j\|\le r_t
\quad\Longleftrightarrow\quad
d_{\ell (i,j)} \leq d_t 
\quad\Longleftrightarrow\quad
\ell(i,j)\le t
\quad\Longleftrightarrow\quad
\{i,j\}\in E(G_t).
\]
Hence the distance--threshold graph at scale $r_t$ is exactly $G_t$. This completes the proof.
\end{proof}

We finish the paper with the following corollary.

\begin{corollary}
The modified strip example $Y(n)$ can be realized as the Vietoris--Rips complex of a finite data set for some sequence of real numbers $0<r_1<\cdots< r_m$ as scale parameters.
\end{corollary}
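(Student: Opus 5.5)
The corollary will follow by combining Proposition \ref{pro:Modified} with Proposition \ref{pro:Realization}. By Proposition \ref{pro:Modified}, the filtered complex $Y(n)$, with the filtration listed above, is the clique complex of its filtered $1$-skeleton. Concretely, let the filtration levels of $Y(n)$ be $Y_1\subseteq Y_2\subseteq\cdots\subseteq Y_m=Y(n)$, and let $G_t$ be the $1$-skeleton of $Y_t$. Then $G_1\subseteq\cdots\subseteq G_m$ is a filtered graph on the vertex set $\{f_i,g_i,v_j,w_i\}$, which has $4n+2$ vertices.

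The first step is to check that $Y_t=\mathrm{Cl}(G_t)$ for every $t$, not just for $t=m$. There are two things to verify.
\begin{enumerate}
\item Every triangle of $Y_t$ is a $3$-clique of $G_t$. This holds because $Y_t$ is a simplicial complex.
\item Every $3$-clique of $G_t$ is a triangle of $Y_t$, and $G_t$ has no $4$-cliques.
\end{enumerate}
For (2), I would first argue that the $3$-cliques of the full graph $G_m$ are exactly the subdivided triangles $\boxed{\pm i}$ and $\circled{\pm i}$. This should follow from the local structure: every edge lies in at most two triangles, and the subdivision by the midpoints $w_i$, $g_i$ breaks the long cycles that could otherwise create extra cliques. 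I expect this to be the main obstacle, so I would check it case by case, by vertex type, in the figure.

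Given that, suppose a $3$-clique of $G_t$ appears at level $t$. It is a triangle of $Y(n)$, and each triangle enters the filtration at the same level as its last edge (the midline $e^b_i$ or $e^f_i$). So it already belongs to $Y_t$. The absence of $4$-cliques also means the clique complex adds no $3$-simplices.

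Finally, apply Proposition \ref{pro:Realization} to the filtered graph $G_1\subseteq\cdots\subseteq G_m$. This produces points $X\subset\mathbb{R}^l$ and radii $0<r_1<\cdots<r_m$ with $VR(X,r_t)\cong \mathrm{Cl}(G_t)=Y_t$ for each $t$. The isomorphisms are induced by the same vertex bijection for all $t$, so they are compatible with the inclusions. Hence the whole filtered complex $Y(n)$ is realized as the Vietoris--Rips filtration of $X$.
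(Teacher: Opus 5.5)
Your route is the paper's: its proof is exactly the combination of Propositions~\ref{pro:Modified} and~\ref{pro:Realization}. Your remark that a single vertex bijection serves all levels is also correct.

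Your step (2) is a fair point to raise. The paper's proof of Proposition~\ref{pro:Modified} only notes that each triangle enters together with its last edge, and never rules out spurious $3$-cliques. However, the heuristic you offer for it is false. For $n\ge 2$ the vertical edges $1,\dots,n-1$ each lie in three triangles; for example, edge $1$ lies in $\boxed{1}$, $\circled{2}$ and the fin half $\circled{-1}$. The subdivision is also not what prevents extra cliques: the $1$-skeleton of $X(n)$ has none either. The midlines are there to give each original triangle its own last edge.

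The correct check is short:
\begin{enumerate}
\item Every horizontal edge $i'$ is subdivided, so the only edges among the $v_j$ are the vertical edges $1,\dots,n+1$. These form a path, so no $3$-clique lies among the $v_j$.
\item The vertex $f_i$ has exactly two neighbours, $g_i$ and the base end of $-i^*$, and these are joined by $e^f_i$.
\item The vertex $w_i$ (resp.\ $g_i$) has exactly three neighbours, namely the vertices of the triangle $\circled{i}$ (resp.\ $\circled{-i}$) of $X(n)$. Among them, only the two ends of the subdivided edge are non-adjacent.
\end{enumerate}
Hence the $3$-cliques of the $1$-skeleton are exactly the $4n$ triangles. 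Each of these triangles contains some $w_i$ or $g_i$, whose three neighbours are not pairwise adjacent, so there is no $4$-clique either.

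There is one further caveat, which the paper shares. $VR(X,r)$ always contains every point of $X$, whereas $Y(n)$ adds its vertices one at a time. Moreover, Proposition~\ref{pro:Realization} concerns graphs on a fixed vertex set, not the growing $1$-skeleta you define. So $VR(X,r_t)\cong Y_t$ fails literally at the initial levels. The fix is to let all $4n+2$ vertices enter at a single initial level, realized by any scale below the minimum pairwise distance. The paper's order of simplices remains compatible with this coarser filtration, so the reduction is unaffected.
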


\begin{proof} This follows from Propositions \ref{pro:Modified} and \ref{pro:Realization}.
\end{proof}

Worst-Case examples for Erd\" os-R\' enyi and Vietoris-Rips filtration models were also given in \cite{GHK-2022}. For these models the worst-case runtime is $\Theta (n^7)$ since the processing of the all higher dimensional simplices is also considered.


\section*{Acknowledgements and Funding Declaration}
The second author is supported by T\" UB\. ITAK 2219-International Postdoctoral Research Fellowship Program (2023, 2nd term). We gratefully acknowledge T\" UB\. ITAK for its support of this research. We thank Matthew Gelvin, Bar\i \c s Co\c skun\" uzer, Caroline Yal\c c\i n, and Koray Karabina for reading an earlier version of the paper and for their helpful comments. Part of the paper was written while the second author was on sabbatical leave visiting University of Waterloo. The second author thanks University of Waterloo for hospitality during his sabbatical leave.


\bibliographystyle{plain}
\bibliography{bibliography-Final}


\appendix

\section{An algorithm for constructing the strip examples}\label{sect:Algorithm}

An algorithm to construct the filtered simplicial complex strip $X(n)$ for $n\geq 2$ can be given as follows. The complex $X(1)$ can be constructed easily by inserting the list of simplices from the picture.

\begin{algorithm}[htbp]
\caption{Building ordered lists of vertices, edges, and triangles of the simplicial complex $X(n)$ for $n\geq 2$}
\label{alg:Building}
\begin{algorithmic}[1]

\Function{BuildVertices}{$n$}
    \State \Return $[f_1,\dots,f_n,v_1,\dots,v_{n+2}]$
\EndFunction

\Function{BuildEdges}{$n$}
    \State $E \gets [\,]$

    \Comment{Horizontal base edges $n',\dots,1', 0'$}
    \For{$i=n,n-1,\dots,3$}
        \State append $(v_{i-2},v_{i+2})$ to $E$ \Comment{edge $i'$}
    \EndFor
    \State append $(v_1,v_4)$ to $E$ \Comment{edge $2'$}
    \State append $(v_2,v_3)$ to $E$ \Comment{edge $1'$}
    \State append $(v_n,v_{n+2})$ to $E$ \Comment{edge $0'$}

    \Comment{Fin edges $-n^*,\dots,-1^*$}
    \For{$i=n,n-1,\dots,2$}
        \State append $(f_i,v_{i-1})$ to $E$ \Comment{edge $-i^*$}
    \EndFor
    \State append $(f_1,v_1)$ to $E$ \Comment{edge $-1^*$}

    \Comment{Fin edges $-n,\dots,-1$}
    \For{$i=n,n-1,\dots,1$}
        \State append $(f_i,v_{i+1})$ to $E$ \Comment{edge $-i$}
    \EndFor

    \Comment{Vertical base edges $n,\dots,1$}
    \For{$i=n,n-1,\dots,2$}
        \State append $(v_{i-1},v_{i+1})$ to $E$ \Comment{edge $i$}
    \EndFor
    \State append $(v_1,v_2)$ to $E$ \Comment{edge $1$}

    \State \Return $E$
\EndFunction

\Function{BuildTriangles}{$n$}
    \State $T \gets [\,]$

    \Comment{Base triangles $\circled{1},\circled{2},\dots,\circled{n}$}
    \State append $(v_1,v_2,v_3)$ to $T$ \Comment{triangle $\circled{1}$}
    \State append $(v_1,v_2,v_4)$ to $T$ \Comment{triangle $\circled{2}$}
    \For{$i=3,\dots,n$}
        \State append $(v_{i-2},v_i,v_{i+2})$ to $T$ \Comment{triangle $\circled{i}$}
    \EndFor

    \Comment{Fin triangles ${\red \circled{-1}},{\red \circled{-2}},{\red \dots,\circled{-n}}$}
    \State append $(f_1,v_1,v_2)$ to $T$ \Comment{triangle ${\red \circled{-1}}$}
    \For{$i=2,\dots,n$}
        \State append $(f_i,v_{i-1},v_{i+1})$ to $T$ \Comment{triangle ${\red \circled{-i}}$}
    \EndFor

    \State \Return $T$
\EndFunction

\end{algorithmic}
\end{algorithm}

\begin{algorithm}[htbp]
\caption{Constructing the filtered simplicial complex $X(n)$ for $n\geq 2$}
\label{alg:Constructing}
\begin{algorithmic}[1]

\Function{CreateWorstCaseComplex}{$n$}
    \State $V \gets$ \Call{BuildVertices}{$n$}
    \State $E \gets$ \Call{BuildEdges}{$n$}
    \State $T \gets$ \Call{BuildTriangles}{$n$}
    \State $\mathcal{K} \gets [\,]$
    \State $t \gets 1$

    \For{each $v\in V$}
        \State append $(\{v\},t)$ to $\mathcal{K}$
        \State $t \gets t+1$
    \EndFor

    \For{each $e\in E$}
        \State append $(\mathrm{sort}(e),t)$ to $\mathcal{K}$
        \State $t \gets t+1$
    \EndFor

    \For{each $\sigma\in T$}
        \State append $(\mathrm{sort}(\sigma),t)$ to $\mathcal{K}$
        \State $t \gets t+1$
    \EndFor

    \State \Return $\mathcal{K}$
\EndFunction

\end{algorithmic}
\end{algorithm}

\FloatBarrier

\end{document}